\newif\ifieeetran
\IfFileExists{IEEEtran.cls}{%
  \ieeetrantrue
  \documentclass[10pt,draftclsnofoot,onecolumn]{IEEEtran}
}{%
  \ieeetranfalse
  \documentclass[10pt]{article}
  \usepackage[margin=25mm]{geometry}
}
\usepackage{color}
\usepackage{xcolor}
\usepackage{tikz}
\usetikzlibrary{positioning,arrows.meta,calc,fit}
\usepackage{listings}
\usepackage{fancyhdr}
\usepackage{parskip}
\usepackage[most]{tcolorbox}
\usepackage[T1]{fontenc}
\usepackage[utf8]{inputenc}
\usepackage{amsmath,amssymb,amsthm,mathtools,bm}
\usepackage{booktabs,tabularx,array}
\usepackage{longtable}
\usepackage{enumitem}
\usepackage{setspace}
\usepackage{eurosym}
\usepackage{cite}
\usepackage{url}
\usepackage[hidelinks]{hyperref}
\usepackage[capitalise,noabbrev]{cleveref}
\ifieeetran\else
  \newenvironment{IEEEkeywords}{\par\noindent\textbf{Index Terms---}}{\par}
\fi

\allowdisplaybreaks[2]
\setlist{topsep=2pt,itemsep=0pt,parsep=0pt,partopsep=0pt}
\AtBeginDocument{%
  \setlength{\abovedisplayskip}{8pt plus 2pt minus 2pt}%
  \setlength{\belowdisplayskip}{8pt plus 2pt minus 2pt}%
  \setlength{\abovedisplayshortskip}{4pt plus 2pt minus 1pt}%
  \setlength{\belowdisplayshortskip}{6pt plus 2pt minus 1pt}%
  \setlength{\jot}{3pt}%
}

\usepackage[capitalise]{cleveref}
\crefname{algorithm}{algorithm}{algorithms}
\usepackage{url}

\usepackage{float}

\usepackage{algorithm}
\usepackage{algorithmicx}
\usepackage{algpseudocode}

\theoremstyle{plain}
\newtheorem{theorem}{Theorem}
\newtheorem{proposition}[theorem]{Proposition}

\newtheorem{corollary}[theorem]{Corollary}
\newtheorem{definition}[theorem]{Definition}
\newtheorem{assumption}[theorem]{Assumption}
\newtheorem{problem}[theorem]{Problem}
\newtheorem{remark}[theorem]{Remark}

\newcommand{\norm}[1]{\left\Vert #1\right\Vert}

\newcommand{\abs}[1]{\left|#1\right|}

\def\field#1{\mathbb #1}%
\def\R{\field{R}}%
\def\N{\field{N}}%
\def\Z{\field{Z}}%

\newcommand{\W}{\ensuremath{\mathcal W}}
\newcommand{\Rn}[1][n]{\R^{#1}}
\newcommand{\Rp}{\R_{\geq 0}}
\newcommand{\Rsp}{\R_{> 0}}
\newcommand{\Zp}{\Z_{\geq 0}}

\newcommand{\Nset}{\mathcal{N}}
\newcommand{\Eset}{\mathcal{E}}
\newcommand{\Iset}{\mathcal{I}}
\newcommand{\Bset}{\mathcal{B}}
\newcommand{\Rset}{\mathcal{R}}
\newcommand{\Gset}{\mathcal{G}}
\newcommand{\Tset}{\mathcal{T}}
\newcommand{\TsetE}{\mathcal{T}_{E}}
\newcommand{\Mset}{\mathcal{M}}
\newcommand{\Nbr}[1]{\mathcal{N}_{#1}}
\newcommand{\Xset}{\mathcal{X}}
\newcommand{\DeltaID}{\Delta_{\mathrm{ID}}}
\newcommand{\DeltaDA}{\Delta_{\mathrm{DA}}}
\newcommand{\TUTC}{\mathbb{T}_{\mathrm{UTC}}}
\newcommand{\Tr}{^{\!\top}}
\newcommand{\defeq}{\vcentcolon=}
\newcommand{\pconv}{p^{\mathrm{conv}}}
\newcommand{\pgen}{p^{\mathrm{g}}}
\newcommand{\ppump}{p^{\mathrm{pump}}}
\newcommand{\pplant}{p^{\mathrm{pl}}}
\newcommand{\pc}{p^{\mathrm{c}}}
\newcommand{\pd}{p^{\mathrm{d}}}
\newcommand{\pres}{p^{\mathrm{res}}}
\newcommand{\preg}{p^{\mathrm{reg}}}
\newcommand{\pflow}{p^{\mathrm{flow}}}
\newcommand{\Pmot}{P^{\mathrm{mot}}}
\newcommand{\Pav}{P^{\mathrm{av}}}
\newcommand{\Pres}{P^{\mathrm{res,max}}}
\newcommand{\speak}{s^{\mathrm{pk}}}
\newcommand{\etareg}{\eta^{\mathrm{reg}}}

\title{Distributed Intraday Energy Management for 16.7-Hz Railway Power Systems---Part I:\\
Theoretical Foundations}

\author{Navid Noroozi%
\thanks{N. Noroozi is with SIGNON Deutschland GmbH (DB InfraGO AG), Europaplatz 2, 10557 Berlin, Germany, \texttt{navid.n.noroozi@deutschebahn.com}.}}

\begin{document}
\maketitle

\begin{abstract}
\begin{singlespace}
In a series of three papers, we propose a distributed intraday scenario planner for energy management in 16.7-Hz single-phase railway power systems, which is realized and tested as open-source Python package \texttt{bahnstrom\_ems v0.9.1}~\cite{noroozi2026bahnstromems}.
The first part of our work develops the mathematical foundation of a two-layer energy management system for a 16.7-Hz single-phase railway power network, where the network is an interconnection of several control areas.
Given a day-ahead plan, last measurement of stored energy in wayside batteries, and  forecasts of the tractive, available regenerative and renewable energy, we provide a mathematical modeling of the 16.7-Hz single-phase railway power network in intraday time-scale.
Then the intraday energy planning is formulated as a risk-neutral convex stochastic optimization problem.
The overall intraday planning is shown to be a sparse convex quadratic program.
Hence, this problem is solved recursively by adopting a distributed scenario model predictive control (MPC) setting.
The corresponding area problems, coordination updates, convergence conditions, and closed-loop key-performance-index accumulation are derived.
In practice, detailed train-motion and microscopic motoring and regenerative-power profiles are usually needed for accurate simulations and numerical validation purposes.
Due to data privacy and security concerns, detailed train-motion and microscopic motoring and regenerative-power profiles in a real power network may not be publicly available.
Therefore, we also propose a mathematically-solid approach to estimate such microscopic information by converting causally timetable and train-motion information into quarter-hour motoring and regenerative-power profiles while exactly preserving each hourly energy target.
\end{singlespace}
\end{abstract}

\begin{IEEEkeywords}
Railway power systems, energy management, regenerative braking, model predictive control, scenario optimization, distributed convex optimization.
\end{IEEEkeywords}

\section{Introduction}
\label{sec:introduction}

The large railway power systems in middle Europe, Germany, Austria, and Switzerland, as well as in north Europe, Norway and Sweden, operate at the nominal frequency of 16.7~Hz and use single-phase traction transmission and supply equipment that is electrically and operationally distinct from the surrounding three-phase 50-Hz public grid.
Within each connected railway network, converter plants, traction substations, railway-owned generating stations, direct renewable sources, and wayside battery energy storage systems are geographically distributed.
Train acceleration and braking move large amounts of power between these assets on a timetable-driven time-scale.
This turns the railway energy supply-demand problem into a constrained, multi-period dispatch problem, where both temporal and spatial dimensions do matter.

The railway system intrinsically operates in multi time-scale.
Time-table, unit commitment and market positions must be prepared before delivery, whereas deviations in train operation, regenerative-braking availability, renewable production, and asset availability must be handled during the day.
A single centralized formulation can represent both time scales, but it obscures their different information patterns and operating requirements.
The railway sector has been facing rapidly increasing disruptions in the train operation which may challenge optimal energy dispatching in a shorter time-scale (e.g., 15 minute intraday, 5 minute real-time operation).
Nevertheless, it is necessary to get trains (and other infrastructural assets of the railway sector) operated energy-efficiently. 
This requires to develop innovative decision making mechanisms which can provide solutions for an optimal energy and/or train operation in a shorter amount of time and, hence, they are more agile to act against operational disruptions.
In this work, we break the complexity of energy management in a 16.7-Hz power network into the scale of smaller areas (clusters) of the overall 16.7-Hz power network.
For each area, a distributed intraday controller uses fresh measurements and forecasts to coordinate the faster corrective dispatch over the 16.7-Hz network.

Regenerative braking is central to this formulation.
Reviews of railway energy-saving measures identify receptive networks, reversible substations, and wayside storage as major means of recovering braking  nergy~\cite{gonzalezgil2013,ratniyomchai2014}.
If traction demand is represented only by a net metered load, however, motoring and braking have already been combined and the amount of braking power actually accepted by the network is not a
control decision.
We instead distinguish gross motoring power, available regenerative power, and accepted regenerative power.
This makes the recovery ratio a traceable output of the dispatch rather than a prescribed parameter.

Centralized--decentralized railway energy-management architectures have previously been developed for railway smart grids~\cite{khayyam2015}.
In particular, regenerative-energy management and wayside storage are well established in the railway literature~\cite{gonzalezgil2013,ratniyomchai2014}.
Multi-time-scale traction-power dispatch by model predictive control (MPC) has also been investigated~\cite{chen2020}.
In the broader control literature, scenario MPC, sample-average objectives, and multi-stage stochastic programming are established methods~\cite{bernardini2009,bernardini2012,mesbah2016}.
Then alternating direction method of multipliers (ADMM) is a standard decomposition method for separable convex programs with linear coupling~\cite{boyd2011}, which is widely used in the context of distributed optimization--based control for energy systems~\cite{Khaki.2019,Braun.2018}.
The linear active power network approximation and convex economic-dispatch models used below likewise follow standard steady-state power-system practice~\cite{zimmerman2011}.

Our contributions are basically divided into to main categories: 1) theoretical foundations for generic modeling and energy management in intraday time-scale of 16.7-Hz power systems; 2) modular, open-source software realization of our theoretical results, which is, in turn, validated through a semi-real openly available dataset of Swiss traction network~\cite{theiler2026}.
For the sake of brevity, we break the presentation of our results into a three-part study, where Part~I turns the theoretical concepts into modular, multi-layered open-source software, and finally Part~III evaluates and validates the theoretical results and the companion software through numerical simulations.
In particular, Part~I first provides a generic modeling of an intraday 16.7~Hz railway power system that includes a power converter connected to the public grid, railway-owned power plants, wayside storage, renewable injection, regenerative-braking recovery, and power exchange between control areas.
Given a day-ahead plan, last measurement of stored energy in wayside batteries, and  forecasts of the tractive, available regenerative and renewable energy, we provide a mathematical modeling of the 16.7-Hz single-phase railway power network in intraday time-scale.
Then the intraday energy planning is formulated as a risk-neutral convex optimization problem.
Using the Sample Average Approximation (SAA)~\cite{kleywegt2002}, the the risk-neutral expected-cost problem is approximated by the average cost over a finite ensemble of sampled uncertainty trajectories.\footnote{In Part~I, we only assume that the sampled scenarios are given. The forecasting layer is presented in details in Part~II.}
By showing the sparsity of the overall optimization problem, we efficiently solve the resulting optimization problem using consensus-ADMM.
Running this setting recursively (in 15-minute resolution) leads to a distributed scenario model predictive control (MPC) setting.

Last but not least, we note that the intraday planner requires separate gross motoring and available regenerative powers on the 15-minute control grid, whereas many public railway datasets provide only hourly energy or mean power and incomplete train-level timing.
Hourly averaging can place acceleration and braking in the same aggregate value and thereby hide the intervals in which braking power exceeds simultaneous motoring demand.
Arbitrary interpolation would restore a finer grid but would not restore railway timing.
Therefore, we also propose a mathematically-solid approach to estimate such microscopic information by converting causally timetable and train-motion information into quarter-hour motoring and regenerative-power profiles while exactly preserving each hourly energy target.

\section{Preliminaries}
\label{sec:two-layer}

This section introduces notation, symbols and the structure of railway power system considered in the current work and the clocks used in the multi-time scale system.
The first subsection reviews the notation.
The following subsections briefly specify the day-ahead interface and the receding-horizon operation of the intraday layer.
We note that a stage number, a scenario index, time indexes denoting multiple time-scales, and the absolute UTC time denoting the real operational time of the system shall \emph{not} be used interchangeably.

\subsection{Notation}

In this paper, $\Rp (\Rsp)$ and $\Zp (\N)$ denote the nonnegative
(positive) real numbers and the nonnegative (positive) integers,
respectively.
Given a set $\mathcal{S} \subseteq \Rn$, $\mathcal{S}^\ell := \underbrace{\mathcal{S} \times \dots \times \mathcal{S}}_{\ell \, \, \mathrm{times}}$.
The $i$th component or partition of $v \in \Rn$ is denoted by $v_i$.
For any $v \in \Rn$, $v^\top$ denotes its transpose.
We write $(v,w)$ to represent $[v^\top,y^\top]^\top$ for $x \in \Rn,y \in \R^p$.
In a more general way, the vector operator $\operatorname{col}_{i=1}^{\ell}(v_i)$ vertically stacks the vectors $v_1\in\R^{n_1},\dots,v_\ell\in\R^{n_1}$ into one long column vector.
Let $d_1,d_2 \in \N$, we denote a zero matrix (resp. vector) $\mathbf{0}_{d_1\times d_2}$ (resp. $\mathbf{0}_{d_1}$).
We may omit the superscripts if the dimension is clear from the context.

For $x \in \Rn$, we, respectively, denote the Euclidean norm and the maximum norm by $\norm{x}$ and by $\abs{x}_\infty$.
For a set $\Omega_0$, a sigma-algebra $\mathcal F$ is a collection of subsets of $\Omega_0$, called events, such that $\Omega_0\in\mathcal F$ and $\mathcal F$ is closed under complements (relative to $\Omega_0$) and countable unions, i.e. $\Omega_0\setminus \mathcal A\in\mathcal F$ for every $\mathcal A\in\mathcal F$, and $\bigcup_{k=1}^{\infty}\mathcal A_k\in\mathcal F$ for every sequence of events $\mathcal A_k\in\mathcal F$.
A probability space $(\Omega_0,\mathcal F,\mathbb P_0)$ consists of a set $\Omega_0$ of
elementary outcomes, a sigma-algebra $\mathcal F$ of events, and a probability measure
$\mathbb P_0:\mathcal F\to[0,1]$.  The sigma-algebra contains $\Omega_0$ and is closed
under complements and countable unions.  The measure satisfies $\mathbb P_0(\Omega_0)=1$
and $\mathbb P_0(\bigcup_{k=1}^{\infty} A_k)=\sum_{k=1}^{\infty}\mathbb P_0(A_k)$
for pairwise disjoint events $A_k\in\mathcal F$.  

\subsection{Railway power system topology, and clocks}

Represent a railway power network by the graph $\mathcal G=(\Nset,\Eset)$, where $\Nset=\{1,\ldots,N\}$ is the set of electrical control areas and $\Eset\subset\Nset\times\Nset$ is the set of railway-power transmission lines stored with a fixed reference orientation.
By convention, we set the time-resolution of the intraday energy planner to 15 minutes.
We then pick a real-valued initial time no later than the beginning of the study and identify every UTC time stamp with its elapsed time from that initial.
The wall-clock time axis is then represented by $\TUTC\subset\R_{\ge0}$.
The intraday sampling \emph{duration} is $\DeltaID=15\ \mathrm{min}=\tfrac14\ \mathrm{h}$.
It is a positive real number with units of time.
Let $n\in\Zp$ be the closed-loop control-step counter and let $\tau_0\in\TUTC$ be the first wall-clock control time.  The $n$th \emph{control instant} is the absolute time $\tau_n\defeq\tau_0+n\DeltaID\in\TUTC$.
Thus $\tau_n$ is the UTC time at which the controller receives the current state, forms a new finite-horizon problem, and selects the next applied control.

For a problem formed at $\tau_n$, let $H\in\mathbb Z_{\ge1}$ and define the prediction-stage
index set $\Tset\defeq\{0,\ldots,H-1\}\subset\Zp$.
The symbol $t\in\Tset$ is exclusively a relative prediction-stage index.
A \emph{stage} is, in plain language, one 15-minute interval of the finite-horizon dispatch together with the controls, states, and exogenous quantities assigned to that interval, where the new information/realization becomes available.
More precisely, stage $t$ of the problem formed at $\tau_n$ begins at $\tau_{n,t}\defeq\tau_n+t\DeltaID$ and represents the half-open wall-clock interval $\mathcal I_{n,t}\defeq[\tau_{n,t},\tau_{n,t}+\DeltaID)$.
Hence $t=0$ is the current 15-minute interval and $t>0$ denotes a future interval relative to the same issue time.

The day-ahead plan has its own absolute start time $\tau_0^{\mathrm{DA}}\in\TUTC$, hourly duration $\DeltaDA=1\ \mathrm{h}$, and horizon length $H_{\mathrm{DA}}\in\mathbb Z_{\ge1}$.
Its integer index set is $\mathcal H^{\mathrm{DA}}=\{0,\ldots,H_{\mathrm{DA}}-1\}$.
The symbol $h\in\mathcal H^{\mathrm{DA}}$ is a generic coordinate of that fixed day-ahead grid.
It is neither an absolute time nor an intraday stage.  The day-ahead hour containing intraday stage $t$ of the problem formed at $\tau_n$ is the evaluated map $h_n(t)\defeq \left\lfloor\frac{\tau_{n,t}-\tau_0^{\mathrm{DA}}}{\DeltaDA}\right\rfloor$, $t\in\Tset$, provided $h_n(t)\in\mathcal H^{\mathrm{DA}}$.
Thus $h$ denotes an arbitrary day-ahead hour, whereas $h_n(t)$ denotes the particular day-ahead hour selected by the absolute time of one intraday stage.
The two symbols are not interchangeable.

Within the intraday planner, an \emph{issue time} is an absolute UTC control instant
$\tau_n\in\TUTC$ at which a measurement set, schedule, forecast, or availability envelope
is declared available for operational use.
At that instant, measurements are acquired, forecasts and availability envelopes are issued,
and the optimization data are frozen using only information available no later than
$\tau_n$.  For an object $y$ issued at $\tau_n$, causality requires $y$ to be
$\mathcal F_{\tau_n}$-measurable, where $\mathcal F_{\tau_n}$ contains all measurements,
confirmed schedules, issued forecasts, active day-ahead decisions, and physical states
available by that time.  An admissible applied decision $u_n$ must likewise be
$\mathcal F_{\tau_n}$-measurable.

At issue time $\tau_n$, the forecasting process supplies a finite set of joint future trajectories
$\omega_n^{(m)}$, $m\in\Mset:=\{1,\ldots,M\}$, with probabilities
$\pi_m>0$, $\sum_m\pi_m=1$.
All scenario data at stage $t=0$ equal the same current measurement. 
Only stages $t\ge1$ may branch.
This convention distinguishes the known present from uncertain future quantities and is used throughout the paper.

\subsubsection{Nomenclature} 
\begin{longtable}{@{}p{4.0cm}p{11.5cm}@{}}
\toprule
\textbf{Symbol} & \textbf{Meaning} \\
\midrule \endhead
$\Pmot_i$ & gross motoring power [MW] \\
$\Pav_i$ & \emph{available} regenerative braking power [MW] \\
$\Pres_i$ & available renewable power [MW] \\
$c^{E}_i,\,c^{E,\mathrm{x}}_i$ & import price / export credit [\euro/MWh] \\
\midrule
$\pconv_i$ & converter power drawn from the $50$\,Hz grid (import $>0$) \\
$q^{+}_i,q^{-}_i\ge0$ & positive/negative parts of $\pconv_i$ \\
$\pgen_g,\ppump_g\ge0$ & railway-plant generation and pumping power [MW]; $\ppump_g\equiv0$ for a unit without pumping capability \\
$\pplant_i$ & net railway-plant injection at area $i$: generation minus pumping [MW] \\
$\pc_i,\pd_i\ge0$, $E_i$ & BESS charge, discharge, state of charge [MWh] \\
$\pres_i$ & dispatched renewable power \\
$\preg_i$ & regen power actually re-injected \\
$\pflow_{e,i}$ & area $i$'s \emph{local copy} of the flow on incident edge $e$ \\
$\theta_{a,i}$ & area $i$'s \emph{local copy} of the voltage angle of area $a\in\{i\}\cup\Nbr{i}$ \\
$\speak_i\ge0$ & peak-above-target slack \\
$\ell_e$ & epigraph variable for the loss on edge $e$ \\
\bottomrule
\end{longtable}

\subsection{Day-ahead interface}

The day-ahead planner is represented only through its output.  For each applicable area and
hour it supplies
\begin{equation}
\mathcal D^{\mathrm{DA}}=
\left\{
\delta_i^{\mathrm{DA}}(h),\ b_i^{\mathrm{DA}}(h),\
E_i^{\mathrm{DA}}(h),\ \bar p_i^{\mathrm{DA}},\
p_g^{\mathrm{g,DA}}(h),\ p_g^{\mathrm{pump,DA}}(h)
\right\},
\label{eq:da-interface}
\end{equation}
where $\delta_i^{\mathrm{DA}}\in\{0,1\}$ is a frozen converter commitment,
$b_i^{\mathrm{DA}}$ is the cleared hourly mean converter exchange (a power),
$E_i^{\mathrm{DA}}$ is a battery-energy set-point, and
$\bar p_i^{\mathrm{DA}}$ is an import-peak target.  The optional schedules
$p_g^{\mathrm{g,DA}}$ and $p_g^{\mathrm{pump,DA}}$ provide reference generation and pumping
positions for railway-owned plant $g$. 
Unlike $\delta_i^{\mathrm{DA}}$, they are continuous
reference values, not commitment variables.  In the present formulation they are retained
in the audit and initialization record but are neither hard constraints nor tracking terms:
the plant remains redispatchable inside its latest issue-time envelope.  A secured planner
may additionally
supply a feasible 15-minute witness for audit and emergency use, but that witness is not a
decision variable of the intraday problem.

\begin{remark}
No day-ahead objective, binary logic, or mixed-integer constraint is required in this paper.
Any day-ahead scheduling method may be used if it provides the quantities in
\eqref{eq:da-interface} on an absolute-time grid and if the meanings and units of those
quantities agree with the intraday model.      
\end{remark}

\subsection{Intraday scenario-MPC layer}
\label{sec:scenario-preliminaries-paper}

Here we briefly review a a standard scenario MPC setting.
Given a probability space $(\Omega_0,\mathcal F,\mathbb P_0)$, let the sub-sigma-algebra $\mathcal F_{\tau_n}\subseteq\mathcal F$ represent all information available at control instant $\tau_n$.
Throughout the paper, a quantity is called $\mathcal F_{\tau_n}$-measurable if it is determined by the information available at $\tau_n$. In particular, an admissible applied control $u_n$ may depend on measurements, forecasts, schedules, and states available by $\tau_n$, but not on future realizations.

Fix a control instant $\tau_n$ and the information available at that instant. Let
$
w=\operatorname{col}{t=0}^{H-1} w(t),\
w(t)=\operatorname{col}{i\in\Nset}
\bigl(\Pmot_i(t),\Pav_i(t),\Pres_i(t)\bigr)
$
denote the random $H$-stage exogenous input trajectory, taking values in
$\W=(\R_{\ge0}^{3N})^H$.
Thus, one realization $\omega\in\W$ specifies the gross motoring demand, regenerative
availability, and renewable availability of all areas over the complete prediction horizon.
Let $\mathbb P_n$ denote the conditional probability distribution of $w$ given the information
available at $\tau_n$.

Let $x$ denote an real-valued optimization decision and let $J(x,w)$ denote its horizon cost under the disturbance $w$.
The corresponding risk-neutral stochastic optimization problem is
\begin{equation}
\inf_{x\in\Xset^{\mathrm{ad}}(\tau_n)}
\mathcal J_{\mathbb P_n}(x),
\qquad
\mathcal J_{\mathbb P_n}(x)
\defeq
\mathbb E_0[J(x,w)\mid\mathcal F_{\tau_n}]
=
\int_{\W} J(x,\omega),\mathbb P_n(\mathrm d\omega),
\label{eq}
\end{equation}
where $\Xset^{\mathrm{ad}}(\tau_n)$ contains the decisions satisfying the required causality
and system constraints. The problem is risk-neutral because its objective minimizes the
expected cost, without an additional variance or tail-risk penalty.

A \emph{scenario} $\omega^{(m)}\in\W$ is one realization of the complete $H$-stage exogenous input trajectory.
For $m\in\Mset=\{1,\ldots,M\}$ and $\pi_m>0$, $\sum_m\pi_m=1$, define
\begin{equation}
\widehat{\mathcal J}_M(x) \defeq \sum_{m\in\Mset}\pi_mJ(x,\omega^{(m)}).
\label{eq:saa-objective-prelim-paper}
\end{equation}
For i.i.d. trajectory sampling from $\mathbb P$, conditional on the issue-time data before
sampling, we have $\pi_m=1/M$.
This gives the \emph{sample average approximation} (SAA),
$\widehat{\mathcal J}_M(x)=M^{-1}\sum_{m=1}^M J(x,\omega^{(m)})$.

A prediction stage $t\in\Tset$ represents the interval
$\mathcal I_{n,t}=[\tau_n+t\DeltaID,\tau_n+(t+1)\DeltaID)$.
Scenario $m$ specifies inputs $\omega(t,m)$ over these same intervals, with
\begin{equation}
\omega^{(m)}=\operatorname{col}_{t=0}^{H-1}\omega(t,m),
\qquad
\omega(t,m)=\operatorname{col}_{i\in\Nset}
\bigl(\Pmot_i(t,m),\Pav_i(t,m),\Pres_i(t,m)\bigr).
\label{eq:scenario-stage-input-paper}
\end{equation}
Given the sampled exogenous inputs $\omega$, let $x$ collect a decision vector at stage $t$ along scenario $m$ and $\ell$ denote a stage cost.
Then we have
\begin{equation}
\widehat{\mathcal J}_M(x)=\sum_{m\in\Mset}\pi_m
\underbrace{\left[\sum_{t\in\Tset}\ell\bigl(t,x(t,m),\omega(t,m)\bigr)\right]}_{J(x,\omega^{(m)})}.
\label{eq:scenario-mpc-cost-prelim-paper}
\end{equation}
Section~\ref{sec:intraday-final} specifies the decisions vector $x$ and the stage cost $\ell$.
Let $\Xset_M^{\mathrm{fan}}(\tau_n)$ contain all possible finite scenario decision vectors.
The intraday MPC planner solves $\min_{x\in\Xset_M^{\mathrm{fan}}(\tau_n)}\widehat{\mathcal J}_M(x)$
at each control instant and applies the first component of the optimal control sequence.
At $\tau_{n+1}$, new measurements and forecasts are obtained and the problem is solved again~\cite{bernardini2009,mesbah2016}.

\section{Mathematical Model of the Intraday Dispatch}
\label{sec:intraday-model}

This section translates the two-layer architecture into the physical constraints of one
finite-horizon intraday problem.  
In particular, it develops network, storage, converter-plant,
railway-plant, renewable, market-coupling, and non-anticipativity constraints.  The aim is to
state exactly which quantities are measured or issued parameters and which quantities remain
intraday control decisions.


\subsection{Sets, orientation, and scenario data}

The subsets $\Iset,\Bset,\Rset\subseteq\Nset$ identify areas with a converter plant,
a wayside battery, and direct renewable injection, respectively.  Railway-owned generating
units form a separate set $\Gset$, with placement map $a:\Gset\to\Nset$ and
$\Gset_i\defeq\{g\in\Gset:a(g)=i\}$.  The distinction is physical: a generating unit is an
asset, whereas a control area is an electrical node.
For edge $e$ incident on area $i$, let $\sigma_{i,e}=+1$ if $i$ is the stored from-node and
$\sigma_{i,e}=-1$ if it is the to-node.  The degree of area $i$ is
$K_i=|\Nbr{i}|$.  For each $(i,t,m) \in \Nset \times \Tset \times \Mset$, the scenario data are the nonnegative gross motoring
power $\Pmot_i(t,m)$, available regenerative power $\Pav_i(t,m)$, and available direct
renewable power $\Pres_i(t,m)$.  Prices, asset limits, day-ahead quantities, and the
initial physical state are fixed data at issue time $\tau_n$.
The outer closed-loop index is suppressed inside one fixed problem to avoid burdening every
horizon variable with a third time index.  Thus, for example,
$\Pav_i(t,m)$ in this and the following sections means the data
$\Pav_{i,n}(t,m)$ of the problem issued at $\tau_n$. 

At stage zero, the train-power inputs are the causal present values available at
$\tau_n$ and are copied identically across the scenario fan:
$\Pmot_i(0,m)=\widehat P_{i,n}^{\mathrm{mot}}$, $\Pav_i(0,m)=\widehat P_{i,n}^{\mathrm{av}}$, $m\in\Mset$.
The hats denote current values supplied to the controller, not decision variables.  In an
operational installation they may be obtained from synchronized traction measurements and a
causal motoring/braking disaggregation or from a state estimate supported by the current
timetable.  
Future values
$\Pmot_i(t,m),\Pav_i(t,m)$ for $t\ge1$ are forecast-scenario inputs.  Thus the quantity
$\Pav_i(0,m)$ in \eqref{eq:regen-bounds-paper} is a measured or causally estimated input to
the intraday planner and is deliberately scenario-independent.

The double-index angle $\theta_{a,i}$ denotes area $i$'s local copy of the voltage angle
owned by area $a\in\{i\}\cup\Nbr{i}$.  Thus $\theta_{i,i}$ is area $i$'s own angle and
$\theta_{j,i}$ is area $i$'s copy of its neighbor $j$'s angle.  This convention is used
throughout the decomposition.

\subsection{Traceable regenerative-braking decision}

The accepted regenerative power is a decision variable satisfying
\begin{equation}
0\le\preg_i(t,m)\le\Pav_i(t,m).
\label{eq:regen-bounds-paper}
\end{equation}
The corresponding net traction load is
\begin{equation}
\xi_i(t,m)=\Pmot_i(t,m)-\preg_i(t,m).
\label{eq:net-traction-paper}
\end{equation}
The difference $\Pav_i-\preg_i$ is braking energy that cannot be accepted by the modeled
network and assets.  This explicit separation is the basis of the recovery KPI in
\cref{sec:kpi}.

\subsection{Nodal balance and linearized single-phase power flow}

For every $i\in\Nset$, $t\in\Tset$, and $m\in\Mset$, the active power balance is
\begin{equation}
\begin{aligned}
&\pconv_i(t,m)+\pplant_i(t,m)+\pd_i(t,m)-\pc_i(t,m)
+\pres_i(t,m)+\preg_i(t,m)\\
&\hspace{22mm}
-\sum_{e\in\Eset:\,i\in e}\sigma_{i,e}\pflow_{e,i}(t,m)
=\Pmot_i(t,m).
\end{aligned}
\label{eq:nodal-balance-paper}
\end{equation}
The sign convention gives a negative contribution for a positive flow leaving the from-node and a positive contribution at the to-node.

At the tertiary dispatch time scale, the single-phase railway network is represented by the
standard lossless linear active power relation adapted to local angle copies
\cite{zimmerman2011}:
\begin{align}
\pflow_{e,i}(t,m)
&=\sigma_{i,e}B_e\bigl(\theta_{i,i}(t,m)-\theta_{j,i}(t,m)\bigr),
\quad j\in\Nbr{i},
\label{eq:local-flow-paper}\\
|\pflow_{e,i}(t,m)|&\le\bar P_e,
\label{eq:line-limit-paper}
\end{align}
where $B_e>0$ is the susceptance of transmission line $e$ and $\bar P_e>0$ is its active power limit.
A reference-area consensus angle is fixed to zero later.  The approximation is an
active power scheduling model, not an electromagnetic or unbalanced AC-network model.

\subsection{Ohmic losses}
The exact ohmic loss on edge $e$ is the convex quadratic $R_e V_{\mathrm{nom}}^{-2}(\pflow_e)^2$, with resistance $R_e$ and nominal line voltage $V_{\mathrm{nom}}$.
To price approximate ohmic losses without introducing a nonlinear constraint, assign each edge once, to its stored from-node, and introduce an epigraph variable $\ell_e(t,m)$ with
\begin{equation}
\ell_e(t,m)\ge a_{e,\kappa}\pflow_{e,\mathrm{from}(e)}(t,m)+b_{e,\kappa},
\qquad \kappa=1,\ldots,K_\ell,
\label{eq:loss-cuts-paper}
\end{equation}
where the affine functions with constant $a_{e,\kappa},b_{e,\kappa}\in\R$ are tangents to
$R_eV_{\mathrm{nom}}^{-2}p^2$ at selected flow values.
Their pointwise maximum is a convex piecewise-linear under-estimator.

\subsection{Wayside battery}

For $i\in\Bset$, define the energy-state index set
$\TsetE\defeq\{0,\ldots,H\}$ and let $E_i(t,m)$ be the stored energy at the start of stage
$t\in\Tset$, with $E_i(H,m)$ denoting the energy at the end of the horizon.  The measured
initial value is $E_i(0,m)=E_i^0$.  The standard discrete energy balance used in railway
storage scheduling~\cite{ratniyomchai2014} is
\begin{align}
E_i(t+1,m)
&=E_i(t,m)+\DeltaID\left(\eta_i^{\mathrm c}\pc_i(t,m)
-\frac{1}{\eta_i^{\mathrm d}}\pd_i(t,m)\right),
&&t\in\Tset,
\label{eq:bess-dynamics-paper}\\
\underline E_i\le E_i(s,m)&\le\overline E_i,
&&s\in\TsetE,
\nonumber\\
0\le\pc_i(t,m),\pd_i(t,m)&\le\overline P_i^{\mathrm b},
&&t\in\Tset,
\label{eq:bess-box-paper}\\
E_i(H,m)&\ge E_i^{\mathrm{term}}.
\label{eq:bess-terminal-paper}
\end{align}
Here $\eta_i^{\mathrm c},\eta_i^{\mathrm d}\in(0,1]$.  Separate nonnegative charging and
discharging variables keep the model convex.  The formulation does not impose the
nonconvex complementarity $\pc_i\pd_i=0$.

\begin{remark}[Charging/discharging relaxation]
A positive throughput cost discourages simultaneous charge and discharge but, for
$\eta_i^{\mathrm c}\eta_i^{\mathrm d}<1$, does not by itself prove complementarity for every
possible price and network condition.  Exact exclusion would require a mode variable or a
complementarity constraint and would destroy the convex-QP structure.
In the lossless special case $\eta_i^{\mathrm c}=\eta_i^{\mathrm d}=1$, reducing equal positive charge and discharge by
their minimum preserves both balance and stored energy and strictly lowers any positive throughput cost.
Hence an optimum with simultaneous operation cannot then exist.
\end{remark}

\subsection{Converter plant}

For $i\in\Iset$, positive and negative exchange are separated as
\begin{align}
\pconv_i(t,m)&=q_i^+(t,m)-q_i^-(t,m),
\qquad q_i^+(t,m),q_i^-(t,m)\ge0,
\label{eq:converter-split-paper}\\
\delta_i^{\mathrm{DA}}(h_n(t))\underline P_i^{\mathrm{cv}}
&\le\pconv_i(t,m)\le
\delta_i^{\mathrm{DA}}(h_n(t))\overline P_i^{\mathrm{cv}},
\label{eq:frozen-commitment-paper}\\
|\pconv_i(0,m)-p_{i,n}^{\mathrm{conv,prev}}|
&\le\Delta\overline P_i^{\mathrm{cv}},
\label{eq:converter-initial-ramp-paper}\\
|\pconv_i(t,m)-\pconv_i(t-1,m)|&\le\Delta\overline P_i^{\mathrm{cv}},
\label{eq:converter-ramp-paper}
\end{align}
where $t\in\Tset\setminus\{0\}$, $p_{i,n}^{\mathrm{conv,prev}}$ is the exchange measured immediately before
$\tau_n$.
A reversible static converter has $\underline P_i^{\mathrm{cv}}<0$.
An import-only converter plant has $\underline P_i^{\mathrm{cv}}=0$.
In this tertiary active power model, the latter case also covers an aggregated rotary converter plant that can import from the
public grid but is not credited with controlled reverse power.  This abstraction does not
represent the electromechanical dynamics, reactive power behavior, losses, or internal
unit-level controls of a rotary converter.  Since the converter-plant commitment
$\delta_i^{\mathrm{DA}}(h_n(t))$ is fixed before the intraday problem is formed,
\eqref{eq:frozen-commitment-paper} is an affine bound rather than an online binary
constraint.

\begin{proposition}
\label{prop:no-sim-converter}
At a stage for which the import price exceeds the export credit,
$c_i^E(t)>c_i^{E,\mathrm x}(t)$, every optimum satisfies
$q_i^+(t,m)q_i^-(t,m)=0$.
\end{proposition}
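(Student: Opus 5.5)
The plan is a standard exchange (perturbation) argument. The objective has not been fully specified at this point, so I take the natural structure that the later sections will make precise: the converter enters the stage cost only through the term $\pi_m\DeltaID\bigl(c_i^E(t)q_i^+(t,m)-c_i^{E,\mathrm x}(t)q_i^-(t,m)\bigr)$, and every constraint involving $q_i^\pm$ depends on them only through their difference $\pconv_i=q_i^+-q_i^-$. Any other term that touches $q_i^\pm$ is assumed nondecreasing in each of $q_i^+$ and $q_i^-$ separately, as with the peak slack $\speak_i$, which is driven by imports. This last point is the one to check explicitly against the stage cost in \cref{sec:intraday-final}.

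First step: argue by contradiction. Suppose an optimal $x^\star$ has $q_i^+(t,m)>0$ and $q_i^-(t,m)>0$ at a stage with $c_i^E(t)>c_i^{E,\mathrm x}(t)$. Set $\epsilon\defeq\min\{q_i^+(t,m),q_i^-(t,m)\}>0$ and build $\tilde x$ from $x^\star$ by replacing $q_i^\pm(t,m)$ with $q_i^\pm(t,m)-\epsilon$, leaving every other component unchanged.

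Second step: verify that $\tilde x$ is feasible. Both parts stay nonnegative by the choice of $\epsilon$, so \eqref{eq:converter-split-paper} holds. Since $\pconv_i(t,m)$ is unchanged, the constraints that involve only $\pconv_i$ are untouched: the nodal balance \eqref{eq:nodal-balance-paper}, the frozen commitment \eqref{eq:frozen-commitment-paper}, and the ramps \eqref{eq:converter-initial-ramp-paper}--\eqref{eq:converter-ramp-paper}. The flows, angles, battery states and loss epigraphs are unchanged, so their constraints hold as well. If the peak constraint is written as $q_i^+\le\bar p_i^{\mathrm{DA}}+\speak_i$, lowering $q_i^+$ keeps it feasible with the same slack. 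Non-anticipativity also survives. At $t=0$ the scenario copies $q_i^\pm(0,m)$ are equal, so equal copies receive the same $\epsilon$ and stay equal; for this I apply the perturbation to all $m$ sharing the stage-zero node at once.

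Third step: compare costs. The objective changes by
\[
-\pi_m\DeltaID\,\epsilon\bigl(c_i^E(t)-c_i^{E,\mathrm x}(t)\bigr)<0,
\]
plus any monotone terms, which do not increase. Hence $\tilde x$ is feasible and strictly cheaper, contradicting optimality. Note that the conclusion holds for \emph{every} optimum, not just for some optimum, because the decrease is strict. I expect the main obstacle to be the bookkeeping that confirms no other cost term or constraint depends on $q_i^+$ or $q_i^-$ individually. The likely candidate is a separate quadratic regularization on $q_i^\pm$ or on the converter exchange; such a term must be either a function of $\pconv_i$ or nondecreasing in each part, so that the strict price gap settles the comparison.
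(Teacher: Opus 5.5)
Your argument is correct and is the paper's own exchange argument: lower $q_i^+(t,m)$ and $q_i^-(t,m)$ by their minimum so that $\pconv_i$ is unchanged, and let the strict gap $c_i^E(t)>c_i^{E,\mathrm x}(t)$ produce a strict cost decrease. Your explicit checks are extra care that the paper omits, in three places: the peak row \eqref{eq:peak-slack-paper} depends on $q_i^+$ alone, so the paper's ``preserves every constraint'' is slightly loose; the weight $\pi_m$ must be carried; and the regularizer $\frac{\varepsilon_i}{2}\|x_i\|^2$ can only decrease.

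One correction: $u_i$ in \eqref{eq:nonanticipativity-paper} contains $\pconv_i$ but not $q_i^\pm$. The stage-zero copies of $q_i^\pm$ therefore need not coincide across scenarios, and subtracting a common $\epsilon$ in all scenarios could drive some copy negative. Perturb only the single pair $(t,m)$ instead; this already leaves non-anticipativity intact, because $\pconv_i(0,m)$ is unchanged.
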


\begin{proof}
Assume $q_i^+>0$ and $q_i^->0$ and set $\delta=\min\{q_i^+,q_i^-\}>0$.  Replacing both
variables by $q_i^+-\delta$ and $q_i^--\delta$ preserves their difference
$\pconv_i$ and therefore preserves every constraint.  The stage cost changes by
$-\DeltaID(c_i^E-c_i^{E,\mathrm x})\delta<0$, contradicting optimality.
This completes the proof.
\end{proof}

\subsection{Railway-owned generation and pumping}

For $g\in\Gset$, let $\pgen_g(t,m)\ge0$ and $\ppump_g(t,m)\ge0$ denote generation and
pumping.  Their exogenous, issue-time admissible envelopes are
$\overline P_g^{\mathrm g}(t,m)$ and $\overline P_g^{\mathrm p}(t,m)$:
\begin{align}
p_g^{\mathrm{pl}}(t,m)&=\pgen_g(t,m)-\ppump_g(t,m),
\qquad
\pplant_i(t,m)=\sum_{g\in\Gset_i}p_g^{\mathrm{pl}}(t,m),
\label{eq:plant-net-paper}\\
0\le\pgen_g(t,m)&\le\overline P_g^{\mathrm g}(t,m),
\qquad
0\le\ppump_g(t,m)\le\overline P_g^{\mathrm p}(t,m).
\label{eq:plant-bounds-paper}
\end{align}
At issue time $\tau_n$, the values
$\overline P_g^{\mathrm g}(t,m)$ and $\overline P_g^{\mathrm p}(t,m)$ are supplied by a
causal plant-availability interface for the absolute interval $\mathcal I_{n,t}$.  They are
upper bounds, not optimized plant powers and not necessarily meter readings. 
Pumping is treated as a contemporaneous controllable load here.
This is the standard convex active power injection abstraction used in steady-state economic dispatch~\cite{zimmerman2011}.
An explicit hydrothermal reservoir scheduling~\cite{heredia1995} is out of scope of the current study and left as our future work.

\begin{remark}
The absence of a binary commitment variable in \eqref{eq:plant-bounds-paper} is intentional.
The current day-ahead planner models each plant as continuously dispatchable between zero and its exogenous generation or pumping envelope.
It supplies continuous reference schedules $p_g^{\mathrm{g,DA}}(h)$ and $p_g^{\mathrm{pump,DA}}(h)$ but no plant start-up, shut-down, minimum-output, minimum-up-time, or minimum-down-time decision.Consequently, the converter-plant binary $\delta_i^{\mathrm{DA}}(h)$ cannot be applied to plant $g$.
The modeling extension covering more general scenarios are left as our future work.
\end{remark}

The plant stage cost is
\begin{equation}
\ell_{g}^{\mathrm{pl}}(t,m)=\DeltaID\left[
c_g^{\mathrm g}(t)\pgen_g(t,m)+c_g^{\mathrm p}(t)\ppump_g(t,m)
+\frac{a_g(t)}{2}\bigl(p_g^{\mathrm{pl}}(t,m)\bigr)^2
\right],
\label{eq:plant-cost-paper}
\end{equation}
where $c_g^{\mathrm g}(t),c_g^{\mathrm p}(t),a_g(t)\ge0$ are fixed at issue time.
The generation and pumping cost coefficients $c_g^{\mathrm g}(t)$ and
$c_g^{\mathrm p}(t)$ have units \euro/MWh.
The parameter $a_g(t)$ is the quadratic cost coefficient for the net electrical plant power $p_g^{\mathrm{pl}}$.
Likewise, it has units \euro/$(\mathrm{MW}^2\,\mathrm h)$.
For a hydroelectric unit, $c_g^{\mathrm g}$ is an externally supplied marginal water opportunity value and $c_g^{\mathrm p}$ represents incremental pumping/usage cost.

\begin{proposition}[No simultaneous plant generation and pumping]
\label{prop:no-sim-plant}
If $c_g^{\mathrm g}(t)+c_g^{\mathrm p}(t)>0$, every optimum satisfies
$\pgen_g(t,m)\ppump_g(t,m)=0$.
\end{proposition}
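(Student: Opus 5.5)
The argument mirrors the proof of \cref{prop:no-sim-converter}: an exchange argument that removes simultaneous generation and pumping while leaving every constraint and the quadratic term unchanged. Suppose, for contradiction, that an optimum has $\pgen_g(t,m)>0$ and $\ppump_g(t,m)>0$ at some stage $t$ and scenario $m$, and set $\delta\defeq\min\{\pgen_g(t,m),\ppump_g(t,m)\}>0$. Define a candidate that replaces the two variables by $\pgen_g(t,m)-\delta$ and $\ppump_g(t,m)-\delta$ and leaves every other variable untouched.

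The first step is feasibility. Both new values are nonnegative by the choice of $\delta$, and they are no larger than before, so the upper envelopes in \eqref{eq:plant-bounds-paper} still hold. The net injection $p_g^{\mathrm{pl}}(t,m)=\pgen_g-\ppump_g$ in \eqref{eq:plant-net-paper} is unchanged, and hence so is $\pplant_i(t,m)$ for $i=a(g)$. Plant variables enter the remaining constraints only through $\pplant_i$, and in particular the nodal balance \eqref{eq:nodal-balance-paper} involves only $\pplant_i$. The perturbation acts on a single $(t,m)$ and keeps the stage-$t$ net injection fixed, so it does not disturb the stage-zero identity across scenarios or any non-anticipativity constraint stated later on net quantities. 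The step I expect to need care is exactly this one: if non-anticipativity is imposed on $\pgen_g(0,m)$ and $\ppump_g(0,m)$ individually, I would apply the same $\delta$ to every scenario at $t=0$. That is legitimate because those values already coincide across $m$, so $\delta$ is common to all of them.

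The second step is the cost change. The quadratic term $\tfrac{a_g(t)}{2}(p_g^{\mathrm{pl}})^2$ depends only on the net injection and so is unchanged. No other term of the objective involves $\pgen_g$ or $\ppump_g$ separately, and I would check this against the full stage cost once it is specified in \cref{sec:intraday-final}. By \eqref{eq:plant-cost-paper}, the objective therefore changes by $-\pi_m\DeltaID\bigl(c_g^{\mathrm g}(t)+c_g^{\mathrm p}(t)\bigr)\delta$, summed over the affected scenarios. This change is strictly negative because $\pi_m>0$, $\DeltaID>0$, $\delta>0$, and $c_g^{\mathrm g}(t)+c_g^{\mathrm p}(t)>0$ by hypothesis, which contradicts optimality. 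Hence every optimum satisfies $\pgen_g(t,m)\ppump_g(t,m)=0$.
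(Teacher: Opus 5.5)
Your proposal is correct and is the same exchange argument as the paper's proof: reduce both variables by $\delta=\min\{\pgen_g,\ppump_g\}$, note that $p_g^{\mathrm{pl}}$ and the quadratic term are unchanged, and collect the strict decrease $\DeltaID\delta(c_g^{\mathrm g}+c_g^{\mathrm p})$. Your common-$\delta$ treatment of non-anticipativity at $t=0$ fills a point the paper's proof passes over, and one claim needs correcting: the regularization $\tfrac{\varepsilon_i}{2}\|x_i\|^2$ in \eqref{eq:local-objective-paper} does involve $\pgen_g$ and $\ppump_g$ separately, but since $(p-\delta)^2<p^2$ for $0<\delta\le p$, that term only decreases further and your conclusion stands.
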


\begin{proof}
If both variables are positive, reduce both by
$\delta=\min\{\pgen_g,\ppump_g\}>0$.  Their difference $p_g^{\mathrm{pl}}$, the nodal
balance, all bounds, and the quadratic term in \eqref{eq:plant-cost-paper} remain unchanged.
The affine plant cost decreases by
$\DeltaID\delta(c_g^{\mathrm g}+c_g^{\mathrm p})>0$, a contradiction.
\end{proof}

\subsection{Renewable dispatch, peak target, and market-energy coupling}

Direct renewable injection satisfies
\begin{equation}
0\le\pres_i(t,m)\le\Pres_i(t,m),\qquad i\in\Rset.
\label{eq:renewable-bounds-paper}
\end{equation}
For a converter area, introduce one horizon peak-exceedance variable per scenario,
\begin{equation}
\speak_i(m)\ge q_i^+(t,m)-\bar p_i^{\mathrm{DA}},
\qquad \speak_i(m)\ge0,\quad\forall t.
\label{eq:peak-slack-paper}
\end{equation}
Using one variable for the horizon maximum avoids charging the same demand-peak event once
per stage.
A stagewise exceedance penalty can be used instead, but it must then be
interpreted as a soft operating penalty rather than as a literal capacity charge.

For a gate-closed hour $h$, let $e_i^{\mathrm{DA}}(h)$ be the cleared converter energy and
$e_{i,h}^{\mathrm{past}}(n)$ the energy already delivered in that hour before control
instant $\tau_n$.  If the current horizon contains every remaining quarter-hour of $h$,
define $G_{n,h}=\{t\in\Tset:h_n(t)=h\}$ and impose, for every scenario,
\begin{equation}
\sum_{t\in G_{n,h}}\pconv_i(t,m)\DeltaID
=e_i^{\mathrm{DA}}(h)-e_{i,h}^{\mathrm{past}}(n).
\label{eq:remaining-market-energy}
\end{equation}
When the day-ahead interface supplies the hourly mean converter power
$b_i^{\mathrm{DA}}(h)$, the corresponding energy is
$e_i^{\mathrm{DA}}(h)=\DeltaDA b_i^{\mathrm{DA}}(h)$.
This remaining-energy equality is causal and does not erase energy already delivered.  An
open hour has no hard equality.  If the horizon does not cover the complete remainder of a
closed hour, exact enforcement requires an additional terminal energy state.
Otherwise, the equality must be deferred or explicitly prorated.

The day-ahead battery set-points enter softly.
Let $E_i^{\mathrm{ref}}(t+1)$ be the absolute-time interpolation of $E_i^{\mathrm{DA}}$ to the end of intraday stage $t$ and $w_i^{\mathrm{soc}}\ge0$.
Define the battery storage tracking cost
\begin{equation}
J_i^{\mathrm{tr}}=
\frac{w_i^{\mathrm{soc}}}{2}
\sum_{m\in\Mset}\pi_m\sum_{t\in\Tset}
\bigl(E_i(t+1,m)-E_i^{\mathrm{ref}}(t+1)\bigr)^2.
\label{eq:soc-tracking-paper}
\end{equation}
Converter power, battery power, renewable dispatch, and accepted braking power remain available for intraday corrections, subject to the physical and market constraints above.

\subsection{First-stage non-anticipativity}

Let the applied control block of area $i$ be
\begin{equation}
u_i(t,m)=\operatorname{col}\bigl(\pconv_i,\pc_i,\pd_i,\pres_i,\preg_i,
(\pgen_g,\ppump_g)_{g\in\Gset_i}\bigr)(t,m),
\end{equation}
with absent-asset entries omitted.
By non-anticipativity, the current control decision cannot depend on a future scenario.
Thus we impose the following non-anticipativity constraint
\begin{equation}
u_i(0,m)=u_i(0,m'),\qquad\forall i,\,\forall m,m'\in\Mset.
\label{eq:nonanticipativity-paper}
\end{equation}
In that way, all scenario exogenous data at $t=0$ are set equal to the current measurement.
Next components of the optimal control sequence may depend on scenario $m$.
This leads to a two-stage scenario fan.

\section{Formal Formulation of Intraday Problem}
\label{sec:intraday-final}

This section assigns economic meaning to the feasible dispatch developed above and assembles the complete finite-scenario optimization problem in a risk-neutral setting.

Recalling the asset models in Section~\ref{sec:intraday-model} and the plant stage cost given by \eqref{eq:plant-cost-paper}, for area $i$, scenario $m$, and stage $t$, define the native operating cost as
\begin{align}
\ell_i(t,m)=\;&
\DeltaID \bigl( \,\underbrace{c_i^E(t)q_i^+(t,m)}_{\text{energy import}}-\underbrace{c_i^{E,\mathrm x}(t)q_i^-(t,m)}_{\text{export credit}}\,\bigr)
\nonumber\\
&+\DeltaID \underbrace{c_i^{\mathrm b}\bigl(\pc_i(t,m)+\pd_i(t,m)\bigr)}_{\text{BESS throughput / aging proxy}}
\nonumber\\
&+\DeltaID \underbrace{c_i^{\mathrm{ct}}\bigl(\Pres_i(t,m)-\pres_i(t,m)\bigr)}_{\text{RES curtailment}} + \underbrace{c^{\mathrm{pk}}\speak_i(m)}_{\text{peak charge}}
\nonumber\\
&+\DeltaID \underbrace{c_i^{\mathrm{rg}}\bigl(\Pav_i(t,m)-\preg_i(t,m)\bigr)}_{\text{un-recovered regen}}
+\underbrace{\sum_{g\in\Gset_i}\ell_g^{\mathrm{pl}}(t,m)}_{\text{railway-plant generation}},
\label{eq:stage-cost-paper}
\end{align}
where penalty coefficients $c_i^{\mathrm b}, c_i^{\mathrm{ct}}, c^{\mathrm{pk}}, c_i^{\mathrm{rg}}, c^{\mathrm{pk}}, c_i^{\mathrm{rg}} \geq 0$.
Import prices $c_i^E(t)$ and export credits $c_i^{E,\mathrm x}(t)$ may be signed.
This changes only affine coefficients and not convexity.
Terms containing only exogenous availability are retained in \eqref{eq:stage-cost-paper} for physical interpretation but may be removed without changing the minimizer.

\begin{remark}
Import prices $c_i^E(t)$ and export credits $c_i^{E,\mathrm x}(t)$ may be positive, zero, or negative.
Since $q_i^+,q_i^-\ge0$, a negative import price $c_i^E(t)<0$ represents a payment received for importing energy, whereas a negative export credit $c_i^{E,\mathrm x}(t)<0$, which makes $-c_i^{E,\mathrm x}(t)q_i^-(t,m)$ positive, represents a payment made for exporting energy.
Negative wholesale electricity prices can arise when high generation coincides with low demand and generation, storage, or demand cannot adjust sufficiently.
The coefficients in this model can reflect such prices when the applicable trading or supply contract passes them through.  They need not be negative under every tariff.
\end{remark}

By decision variables of area $i\in \Nset$ for all $H$ stages and $M$ scenarios, we introduce the following stacked decision vector
\begin{equation}
\begin{aligned}
x_i: =\operatorname{col}\big(&
\pconv_i,q_i^+,q_i^-,\speak_i;\
\pc_i,\pd_i,E_i;\
\pres_i;\
(\pgen_g,\ppump_g)_{g\in\Gset_i};\
\preg_i;\\
&\theta_{i,i};\ (\pflow_{e,i})_{e\ni i};\
(\theta_{j,i})_{j\in\Nbr{i}};\ (\ell_e)_{i=\mathrm{from}(e)}
\big)\in\R^{n_i}.
\end{aligned}
\label{eq:local-vector-paper}
\end{equation}
where $n_i$ is obtained by adding the numbers of scalar coordinates in its blocks, while blocks for absent converter, battery, or renewable assets are omitted:
Each scalar power, angle, flow, or loss variable has $HM$ entries, indexed by $(t,m)\in\Tset\times\Mset$.
The peak block $\speak_i$ has $M$ entries, one per scenario, whereas the battery energy block $E_i$ has $(H+1)M$ entries, indexed by $(s,m)\in\TsetE\times\Mset$.
Let $\mathbf{1}_{\{i\in\mathcal S\}}$ equal one if $i\in\mathcal S$ and zero otherwise, and let $L_i^{\mathrm{out}}\defeq|\{e\in\Eset:i=\mathrm{from}(e)\}|$ be the number of lines whose loss epigraphs are assigned to area $i$.
Since there are $K_i$ incident lines and $K_i$ neighbor-angle copies, the exact dimension is
\begin{equation}
\begin{aligned}
n_i={}&
\underbrace{(3HM+M)\mathbf{1}_{\{i\in\Iset\}}}_{\pconv_i,\,q_i^+,\,q_i^-,\,\speak_i}
+\underbrace{\bigl(2HM+(H+1)M\bigr)\mathbf{1}_{\{i\in\Bset\}}}_{\pc_i,\,\pd_i,\,E_i}
\\
&+\underbrace{HM\mathbf{1}_{\{i\in\Rset\}}}_{\pres_i}
+\underbrace{2HM|\Gset_i|}_{(\pgen_g,\,\ppump_g)_{g\in\Gset_i}}
+\underbrace{HM}_{\preg_i}
\\
&+\underbrace{HM}_{\theta_{i,i}}
+\underbrace{HMK_i}_{(\pflow_{e,i})_{e\ni i}}
+\underbrace{HMK_i}_{(\theta_{j,i})_{j\in\Nbr{i}}}
+\underbrace{HML_i^{\mathrm{out}}}_{(\ell_e)_{i=\mathrm{from}(e)}}.
\end{aligned}
\label{eq:local-vector-dimension-expanded-paper}
\end{equation}

\begin{definition}\label{def:feasible_ser_Xi}
Let $\Xset_i \subset \R^{n_i}$ be the aggregated set defined by the local constraints \eqref{eq:regen-bounds-paper}, \eqref{eq:nodal-balance-paper}--\eqref{eq:nonanticipativity-paper} together with the applicable bounds, initial conditions, day-ahead quantities, and line-loss cuts.
We refer to $\Xset_i$ as \emph{local feasible set of intraday planning}.
\end{definition}

As seen later in Proposition~\ref{prop:local-polyhedron}, the feasible set $\Xset_i$ is a polyhedron.

For $a\in\{i\}\cup\Nbr{i}$, let $S_i^{(a)}\in\{0,1\}^{HM\times n_i}$ select the block $\theta_{a,i}$, we have
\begin{equation}
S_i^{(a)}x_i=\theta_{a,i}\in\R^{HM}.
\label{eq:selector-paper}
\end{equation}
Each row contains one unit entry, and the selected angle blocks are disjoint.  Therefore
$S_i^{(a)}S_i^{(a)\top}=I_{HM}$.

Assign each line-loss epigraph~\eqref{eq:loss-cuts-paper} once, to its from-area.
Considering the battery storage tracking cost~\eqref{eq:soc-tracking-paper} and the line-loss epigraph, the \emph{local objective} is
\begin{align}
f_i(x_i)=\;&
\sum_{m\in\Mset}\pi_m\left[
\sum_{t\in\Tset}\Bigg(
\ell_i(t,m)+
\sum_{e:\,i=\mathrm{from}(e)}c^{\mathrm{ls}}\ell_e(t,m)\DeltaID
\Bigg)
\right] +J_i^{\mathrm{tr}} +\frac{\varepsilon_i}{2}\|x_i\|^2,
\label{eq:local-objective-paper}
\end{align}
where $c^{\mathrm{ls}}>0$ and $\varepsilon_i\ge0$.
The last term on the right-hand side~\eqref{eq:local-objective-paper} is just a numerical regularization term.


Now we formally introduce the finite-scenario problem which will be solved in the next section using consensus ADMM in a distributed way.

\begin{problem}\label{prob:intraday_planning}
Given the network topology $\mathcal G=(\Nset,\Eset)$.
Recall the operating cost~\eqref{eq:stage-cost-paper} and local feasible set of intraday planning $\Xset_i$.
Let $z_a\in\R^{HM}$ be so-called one consensus angle trajectory for any area $a \in \Nset$ and choose a reference area $r$.
Given $x_i \in \Xset_i$ with $x_i$ as in~\eqref{eq:local-vector-paper} and~\eqref{eq:local-vector-dimension-expanded-paper}, the intraday planning is respresented by
\begin{subequations}
\label{eq:ID-paper}
\begin{align}
\min_{\{x_i\},\{z_a\}}\quad&\sum_{i\in\Nset}f_i(x_i),
\label{eq:ID-objective-paper}\\
\mathrm{s.t.}\quad&x_i\in\Xset_i,
&&i\in\Nset,
\label{eq:ID-local-paper}\\
&S_i^{(a)}x_i=z_a,
&&i\in\Nset,\quad a\in\{i\}\cup\Nbr{i},
\label{eq:ID-consensus-paper}\\
&z_r=0.
\label{eq:ID-reference-paper}
\end{align}
\end{subequations}
\end{problem}

\section{Centralized Equivalence, Convexity, and Sparsity}
\label{sec:properties}

This section investigates the structural properties of Problem~\ref{prob:intraday_planning}.
This enables us to use the distributed convex optimization algorithms to solve Problem~\ref{prob:intraday_planning}.
It proves that each area has a polyhedral feasible set, that agreement of duplicated boundary angles exactly recovers the centralized network, that the resulting problem is a convex quadratic program, and that its local matrices retain the stagewise sparsity required for area-wise computation.

\subsection{Polyhedral local sets}

\begin{proposition}
\label{prop:local-polyhedron}
For fixed scenario data, day-ahead inputs, initial conditions, and asset parameters, each $\Xset_i$ as in Definition~\ref{def:feasible_ser_Xi} is a closed convex polyhedron.
\end{proposition}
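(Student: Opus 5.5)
The plan is to show that, once the data are frozen, every constraint defining $\Xset_i$ in Definition~\ref{def:feasible_ser_Xi} is either an affine equality $a^\top x_i=b$ or an affine inequality $a^\top x_i\le b$ in the coordinates of $x_i\in\R^{n_i}$. Then $\Xset_i$ is a finite intersection of hyperplanes and closed half-spaces, hence a polyhedron. Closedness and convexity follow because each such set is closed and convex and both properties are preserved under finite intersections.

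I will go through the constraint families in the order they appear and record their form.
\begin{itemize}
\item \emph{Equalities.} The nodal balance \eqref{eq:nodal-balance-paper}, local flow \eqref{eq:local-flow-paper}, battery dynamics \eqref{eq:bess-dynamics-paper} with $E_i(0,m)=E_i^0$, converter split \eqref{eq:converter-split-paper}, plant net \eqref{eq:plant-net-paper}, market energy \eqref{eq:remaining-market-energy} and non-anticipativity \eqref{eq:nonanticipativity-paper} are affine equalities. In each, the coefficients ($\sigma_{i,e}$, $B_e$, $\DeltaID$, $\eta_i^{\mathrm c}$, $1/\eta_i^{\mathrm d}$) and the right-hand sides ($\Pmot_i(t,m)$, $E_i^0$, $e_i^{\mathrm{DA}}(h)-e_{i,h}^{\mathrm{past}}(n)$) are fixed numbers.
\item \emph{Bounds.} The box and bound constraints \eqref{eq:regen-bounds-paper}, \eqref{eq:bess-box-paper}, \eqref{eq:bess-terminal-paper}, \eqref{eq:frozen-commitment-paper}, \eqref{eq:plant-bounds-paper}, \eqref{eq:renewable-bounds-paper} and \eqref{eq:peak-slack-paper} are affine inequalities. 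The bounds $\Pav_i(t,m)$, $\overline P_g^{\mathrm g}(t,m)$ and $\Pres_i(t,m)$ are scenario data, not variables. In \eqref{eq:frozen-commitment-paper} the binary $\delta_i^{\mathrm{DA}}(h_n(t))$ is a frozen day-ahead number, so the bound is affine, as the paper already notes.
\item \emph{Loss cuts.} The cuts \eqref{eq:loss-cuts-paper} are finitely many ($K_\ell$ per $(e,t,m)$) affine inequalities.
\item \emph{Absolute values.} The line limits \eqref{eq:line-limit-paper} and ramp constraints \eqref{eq:converter-initial-ramp-paper}, \eqref{eq:converter-ramp-paper} contain absolute values. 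I will rewrite each $|y|\le c$ as the pair $y\le c$ and $-y\le c$, which is exactly equivalent. Here $p_{i,n}^{\mathrm{conv,prev}}$ is a measured constant.
\end{itemize}

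Finiteness of the whole system follows because $\Tset$, $\Mset$, $\TsetE$, $\Gset_i$, the incident edges and the cut index sets are all finite.

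The main point needing care is the bookkeeping rather than any deep step. One must check that no constraint multiplies two decision variables. The only candidates are the complementarity products $\pc_i\pd_i$, $q_i^+q_i^-$ and $\pgen_g\ppump_g$. These are explicitly \emph{not} imposed; they are instead consequences of optimality (Proposition~\ref{prop:no-sim-converter}, Proposition~\ref{prop:no-sim-plant}). One must also check that quantities such as $\delta_i^{\mathrm{DA}}$, $h_n(t)$ and $G_{n,h}$ are fixed at issue time, so that the index sets and coefficients do not depend on $x_i$.

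Since the statement does not require it, I will not claim nonemptiness. I will note that the polyhedron may be empty for inconsistent data, and that this does not affect the claim.
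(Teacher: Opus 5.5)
Your proposal is correct and follows the paper's own proof. Both arguments classify every defining constraint as an affine equality or an affine inequality: the absolute-value line and ramp limits are split into one-sided rows, and the frozen commitment $\delta_i^{\mathrm{DA}}$ only multiplies constants. Both then conclude that a finite intersection of hyperplanes and closed half-spaces is a closed convex, possibly empty, polyhedron.
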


\begin{proof}
The nodal balance \eqref{eq:nodal-balance-paper}, local flow relation
\eqref{eq:local-flow-paper}, battery dynamics \eqref{eq:bess-dynamics-paper}, converter
split \eqref{eq:converter-split-paper}, plant definition \eqref{eq:plant-net-paper},
remaining-energy condition \eqref{eq:remaining-market-energy}, and non-anticipativity
condition \eqref{eq:nonanticipativity-paper} are affine equalities.  Line limits,
battery and converter bounds, plant envelopes, renewable and regenerative bounds, peak
epigraph inequalities, terminal-energy inequalities, and loss cuts are affine
inequalities.  The fixed day-ahead commitment multiplies constants and therefore introduces
no binary variable into the intraday problem.  Hence $\Xset_i$ is the intersection of
finitely many affine hyperplanes and closed half-spaces.  Such an intersection is a closed
convex polyhedron, possibly empty.
\end{proof}

\subsection{Angle-only consensus and the centralized network}

Let the centralized physical problem use one angle $\theta_a(t,m)$ per area and one flow
$p_e(t,m)=B_e(\theta_{\mathrm{from}(e)}-\theta_{\mathrm{to}(e)})$ per transmission line $e$.  Its
objective is obtained from $\sum_i f_i$ after substituting every local copy by the
corresponding physical angle and assigning every edge cost once, exactly as in
\eqref{eq:local-objective-paper}.

\begin{theorem}[Angle consensus recovers the coupled network]
\label{thm:angle-equivalence}
Let $(\{x_i\},\{z_a\})$ satisfy \eqref{eq:ID-local-paper}--\eqref{eq:ID-reference-paper}.
For every edge $e$ with $i=\mathrm{from}(e)$ and $j=\mathrm{to}(e)$ and every $(t,m)$, we have
\begin{align}
\pflow_{e,i}(t,m)&=B_e\bigl(z_i(t,m)-z_j(t,m)\bigr),
\label{eq:true-flow-from}\\
\pflow_{e,j}(t,m)&=\pflow_{e,i}(t,m).
\label{eq:flow-agreement}
\end{align}
\end{theorem}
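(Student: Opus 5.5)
The argument only combines the local flow relation \eqref{eq:local-flow-paper}, which belongs to each $\Xset_i$ by Definition~\ref{def:feasible_ser_Xi}, with the consensus constraints \eqref{eq:ID-consensus-paper}, which identify every local angle copy with a global trajectory. Fix an edge $e$ with $i=\mathrm{from}(e)$ and $j=\mathrm{to}(e)$, and fix $(t,m)\in\Tset\times\Mset$. Since $e$ is incident on both endpoints, $j\in\Nbr{i}$ and $i\in\Nbr{j}$. Consequently area $i$ holds blocks $\theta_{i,i}$ and $\theta_{j,i}$, and area $j$ holds blocks $\theta_{j,j}$ and $\theta_{i,j}$. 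By the selector identity \eqref{eq:selector-paper}, the constraint \eqref{eq:ID-consensus-paper} with $a\in\{i\}\cup\Nbr{i}$ reads $\theta_{a,i}=z_a$ componentwise in $\R^{HM}$. This gives $\theta_{i,i}(t,m)=z_i(t,m)$ and $\theta_{j,i}(t,m)=z_j(t,m)$, and likewise $\theta_{j,j}(t,m)=z_j(t,m)$ and $\theta_{i,j}(t,m)=z_i(t,m)$.

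For \eqref{eq:true-flow-from}, apply \eqref{eq:local-flow-paper} at area $i$ with neighbor $j$. Because $i$ is the from-node, $\sigma_{i,e}=+1$, so $\pflow_{e,i}(t,m)=B_e(\theta_{i,i}(t,m)-\theta_{j,i}(t,m))=B_e(z_i(t,m)-z_j(t,m))$.

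For \eqref{eq:flow-agreement}, apply \eqref{eq:local-flow-paper} at area $j$ with neighbor $i$. Here $\sigma_{j,e}=-1$, so
\[
\pflow_{e,j}(t,m)=-B_e\bigl(\theta_{j,j}(t,m)-\theta_{i,j}(t,m)\bigr)=-B_e\bigl(z_j(t,m)-z_i(t,m)\bigr)=B_e\bigl(z_i(t,m)-z_j(t,m)\bigr),
\]
which equals $\pflow_{e,i}(t,m)$ by the first part. The reference condition \eqref{eq:ID-reference-paper} is not needed for either identity; it only pins the gauge of the angles.

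The only step that needs care is reading \eqref{eq:local-flow-paper} correctly. The neighbor $j$ it refers to must be the other endpoint of the specific edge $e$, so that each incident edge carries its own flow equation. The sign $\sigma_{i,e}$ must also be applied consistently at both endpoints, so that the two local copies are expressed in the common from-to orientation rather than each in its own outward orientation. Once this is fixed, the two signs cancel at the to-node and the agreement follows. If the network admits parallel edges between the same pair of areas, the argument is unchanged, since it is applied edge by edge.
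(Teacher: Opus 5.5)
Your proposal is correct and follows essentially the same route as the paper's proof. You use the consensus constraints to identify the local angle copies $\theta_{i,i},\theta_{j,i},\theta_{j,j},\theta_{i,j}$ with $z_i$ and $z_j$, then apply the local flow relation at the from-node with $\sigma_{i,e}=+1$ and at the to-node with $\sigma_{j,e}=-1$. Your extra remarks, that $z_r=0$ is not used and that the argument applies edge by edge, are accurate but not needed.
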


\begin{proof}
Fix $e$, $t$, and $m$ and suppress $(t,m)$.  Since $j\in\Nbr{i}$, area $i$ holds both
$\theta_{i,i}$ and $\theta_{j,i}$.  Consensus gives
$\theta_{i,i}=z_i$ and $\theta_{j,i}=z_j$.  Because $i$ is the stored from-node,
$\sigma_{i,e}=+1$, and \eqref{eq:local-flow-paper} gives
\[
\pflow_{e,i}=B_e(\theta_{i,i}-\theta_{j,i})=B_e(z_i-z_j),
\]
which proves \eqref{eq:true-flow-from}.

Area $j$ holds $\theta_{j,j}=z_j$ and $\theta_{i,j}=z_i$.  Since
$\sigma_{j,e}=-1$, its local flow equation gives
\[
\pflow_{e,j}=-B_e(\theta_{j,j}-\theta_{i,j})
=-B_e(z_j-z_i)=B_e(z_i-z_j)=\pflow_{e,i}.
\]
Thus the two node copies agree as a consequence of angle consensus. 
\end{proof}

\begin{corollary}
\label{cor:central-equivalence}
The feasible points of \eqref{eq:ID-paper} are in one-to-one correspondence with feasible points of the centralized physical problem, after the redundant local copies are included or eliminated.
Corresponding points have identical objective values.
Hence, the optimal values are equal, and every optimum of either formulation maps to an optimum of the other.
\end{corollary}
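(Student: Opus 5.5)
We construct two explicit maps between the two feasible sets, show that they are mutually inverse, and show that they preserve the objective. Optimality transfer then follows immediately.

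\emph{Forward map (distributed to centralized).} Let $(\{x_i\},\{z_a\})$ satisfy \eqref{eq:ID-local-paper}--\eqref{eq:ID-reference-paper}. Set $\theta_a(t,m)\defeq z_a(t,m)$ for every area. For each edge $e$, set $p_e\defeq\pflow_{e,\mathrm{from}(e)}$. All non-network variables (converter, battery, renewable, regenerative, plant, peak slack, loss epigraph) are copied unchanged. By Theorem~\ref{thm:angle-equivalence}, $p_e=B_e(\theta_{\mathrm{from}(e)}-\theta_{\mathrm{to}(e)})$, which is the centralized flow law. By \eqref{eq:flow-agreement}, both endpoint copies equal $p_e$. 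Hence the term $\sum_e\sigma_{i,e}\pflow_{e,i}$ in each local nodal balance \eqref{eq:nodal-balance-paper} coincides with the centralized incidence sum $\sum_e\sigma_{i,e}p_e$, and the centralized balance holds. The line limits \eqref{eq:line-limit-paper} and loss cuts \eqref{eq:loss-cuts-paper} are stated in terms of $\pflow_{e,\mathrm{from}(e)}=p_e$, so they transfer verbatim. All remaining constraints in $\Xset_i$ involve only area-$i$ variables, which are shared by both formulations. The reference condition $z_r=0$ becomes $\theta_r=0$.

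\emph{Backward map (centralized to distributed).} Take a centralized feasible point with $\theta_r=0$. Define $z_a\defeq\theta_a$, $\theta_{a,i}\defeq\theta_a$ for all $a\in\{i\}\cup\Nbr{i}$, and $\pflow_{e,i}\defeq p_e$ for each $e\ni i$. Then:
\begin{itemize}
\item \eqref{eq:ID-consensus-paper} holds by construction;
\item \eqref{eq:local-flow-paper} holds because $\sigma_{i,e}B_e(\theta_i-\theta_j)=p_e$ for either orientation of $e$;
\item all other local constraints are inherited directly.
\end{itemize}

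\emph{Bijection and objective equality.} The two maps are inverse to one another. Going forward then backward, every local copy is forced: the consensus constraints fix $\theta_{a,i}=z_a$, and \eqref{eq:local-flow-paper} then fixes $\pflow_{e,i}$. Thus the redundant copies carry no free information. This is the precise sense of ``after the redundant local copies are included or eliminated.'' For the objective, every term of $f_i$ in \eqref{eq:local-objective-paper} depends on non-copy variables, except the loss term. The loss term is assigned once per edge to its from-area, exactly as in the centralized objective, so $\sum_i f_i$ equals the centralized objective at corresponding points.

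The main delicate point is the regularizer $\frac{\varepsilon_i}{2}\|x_i\|^2$. It includes the angle and flow copies, so the centralized objective must be defined with the same copies substituted in; with that substitution it is a function of the centralized variables. The paper defines the centralized objective exactly this way, ``after substituting every local copy by the corresponding physical angle.''

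\emph{Optimal values and optima.} Since the correspondence is a value-preserving bijection between feasible sets, the infima coincide, including the cases where a set is empty or the problem is unbounded. Moreover, any point attaining the infimum in one formulation maps to a point attaining it in the other.
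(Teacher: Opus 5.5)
Your proposal is correct and follows essentially the same route as the paper: project by $\theta_a=z_a$ with $p_e$ equal to the common flow from Theorem~\ref{thm:angle-equivalence}, lift by copying the centralized angles and flows into every holder, note that consensus together with \eqref{eq:local-flow-paper} forces all copies (so the two maps are mutually inverse), and preserve the objective because each edge cost is assigned once. Your explicit handling of the regularizer $\frac{\varepsilon_i}{2}\|x_i\|^2$, which involves the angle and flow copies and so must be substituted into the centralized objective, makes precise a point the paper leaves implicit in how it defines that objective.
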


\begin{proof}
Given a feasible point of \eqref{eq:ID-paper}, set the centralized angle to
$\theta_a=z_a$ and the centralized line flow to the common value established by
\cref{thm:angle-equivalence}.  Every nodal balance, line limit, and local asset constraint is
then exactly one of the already satisfied local constraints, and $z_r=0$ fixes the centralized
reference angle.  Since each edge cost is assigned once, substituting the consensus values
leaves the objective unchanged.

Conversely, start from a feasible centralized point.
In every holder $i$ of angle $a$, set $\theta_{a,i}=\theta_a$ and set $z_a=\theta_a$.  Give both nodes of each edge the same centralized flow, and copy every area-local asset variable into its corresponding $x_i$.
All local equations and bounds are then satisfied, and every consensus equality holds by
construction.  This lifting is unique once the centralized variables and the declared block
ordering are fixed.  The two maps are inverses after copy elimination and preserve the
objective, proving the result.
\end{proof}

\subsection{Convex quadratic-program structure}

\begin{assumption}[Standing numerical and economic conditions]
\label{ass:standing-paper}
For each fixed issue time $\tau_n$, the following conditions hold.
\begin{enumerate}[label=(\roman*),leftmargin=2em]
\item The graph $\mathcal G=(\Nset,\Eset)$ is finite and connected,
$H,M,K_\ell\in\N$, and
$B_e,\bar P_e\in(0,\infty)$ for every $e\in\Eset$, as in
\eqref{eq:local-flow-paper}--\eqref{eq:line-limit-paper}.
\item The battery and converter parameters in
\eqref{eq:bess-dynamics-paper}--\eqref{eq:converter-ramp-paper} satisfy
\begin{align*}
&0\le\underline E_i\le\overline E_i<\infty,\quad
\overline P_i^{\mathrm b}\in[0,\infty),\quad
\eta_i^{\mathrm c},\eta_i^{\mathrm d}\in(0,1],\quad
E_i^0,E_i^{\mathrm{term}}\in\R,
&&i\in\Bset,\\
&-\infty<\underline P_i^{\mathrm{cv}}\le0\le
\overline P_i^{\mathrm{cv}}<\infty,\quad
\Delta\overline P_i^{\mathrm{cv}}\in[0,\infty),\quad
p_{i,n}^{\mathrm{conv,prev}}\in\R,
&&i\in\Iset.
\end{align*}
\item The data on the right-hand sides of
\eqref{eq:regen-bounds-paper}, \eqref{eq:nodal-balance-paper},
\eqref{eq:plant-bounds-paper}, and \eqref{eq:renewable-bounds-paper} satisfy
\begin{align*}
&\Pmot_i(t,m),\Pav_i(t,m),\Pres_i(t,m)\in[0,\infty),
&&(i,t,m)\in\Nset\times\Tset\times\Mset,\\
&\overline P_g^{\mathrm g}(t,m),\overline P_g^{\mathrm p}(t,m)\in[0,\infty),
&&(g,t,m)\in\Gset\times\Tset\times\Mset.
\end{align*}
The nonnegative intervals here exclude $+\infty$.  The bounds are imposed only for
applicable assets.  An absent renewable channel is zero.
\item For every applicable index, $\delta_i^{\mathrm{DA}}(h)\in\{0,1\}$,
$\bar p_i^{\mathrm{DA}}\in[0,\infty)$, and
$b_i^{\mathrm{DA}}(h)$, $E_i^{\mathrm{DA}}(h)$,
$p_g^{\mathrm{g,DA}}(h)$, $p_g^{\mathrm{pump,DA}}(h)$,
$e_i^{\mathrm{DA}}(h)$, $e_{i,h}^{\mathrm{past}}(n)$, and
$E_i^{\mathrm{ref}}(t+1)$ are real numbers.  All values $h_n(t)$ used in the problem
belong to $\mathcal H^{\mathrm{DA}}$.  The loss-cut coefficients
$a_{e,\kappa},b_{e,\kappa}$ in \eqref{eq:loss-cuts-paper} are real numbers.
\item The scenario probabilities satisfy $\pi_m\in(0,1]$ and
$\sum_{m\in\Mset}\pi_m=1$.  For all applicable areas, units, and stages,
\begin{gather*}
c_g^{\mathrm g}(t),c_g^{\mathrm p}(t),a_g(t),
c_i^{\mathrm b},c_i^{\mathrm{ct}},c_i^{\mathrm{rg}},
w_i^{\mathrm{soc}},c^{\mathrm{pk}},\varepsilon_i\in[0,\infty),
\qquad c^{\mathrm{ls}}\in(0,\infty),\\
c_i^E(t),c_i^{E,\mathrm x}(t)\in\R,\qquad
c_i^E(t)>c_i^{E,\mathrm x}(t)\quad(i\in\Iset,t\in\Tset).
\end{gather*}
The scenario values and all parameters listed above are fixed during the solution of
the issue-time optimization problem.
\end{enumerate}
\end{assumption}

\begin{theorem}[Convex-QP representation]
\label{thm:convex-qp-paper}
Under \cref{ass:standing-paper}, Problem~\ref{prob:intraday_planning} is a convex quadratic program.
In particular, stacking $v=\operatorname{col}((x_i)_{i\in\Nset},(z_a)_{a\in\Nset}) \in \R^{\sum_{i\in\Nset}n_i+NHM}$, the overall cost function of the intraday planning is represented as the following quadratic program (QP)
\begin{equation}
\min_v\ \frac12v\Tr Qv+q\Tr v
\quad\mathrm{s.t.}\quad A_{\mathrm{eq}}v=b_{\mathrm{eq}},\quad
A_{\mathrm{in}}v\le b_{\mathrm{in}},
\label{eq:standard-qp-paper}
\end{equation}
where $Q=Q\Tr\succeq0$ with $Q =\operatorname{blkdiag}\big(Q_1,\ldots,Q_N,\mathbf0_{NHM\times NHM}\big)$ and $q = \operatorname{col}\big(q_1,\dots,q_N,\mathbf0_{NHM}\big)$, with an explicit representation of matrices $A_{\mathrm{eq}},A_{\mathrm{in}}$, $Q_i$, and $q_i$ as in~\eqref{eq:qp-constraint-blocks-paper}, \eqref{eq:local-hessian-explicit-paper}, and \eqref{eq:local-linear-explicit-paper}, respectively, in Appendix~\ref{app:qp-blocks-paper}.
More particularly, if $\varepsilon_i>0$ for every area, each local physical Hessian block is positive definite.
\end{theorem}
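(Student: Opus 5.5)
The plan is to verify the three ingredients of a convex QP directly: the objective is a quadratic with a symmetric positive semidefinite Hessian, and every constraint is affine. For the constraints, Proposition~\ref{prop:local-polyhedron} already gives that each $\Xset_i$ is a polyhedron, so it can be written as $A_i^{\mathrm{eq}}x_i=b_i^{\mathrm{eq}}$, $A_i^{\mathrm{in}}x_i\le b_i^{\mathrm{in}}$. Here the right-hand sides collect the scenario data, the day-ahead quantities, the initial state and the loss-cut intercepts, and all of these are finite reals by \cref{ass:standing-paper}(ii)--(iv). The consensus equalities \eqref{eq:ID-consensus-paper} are linear, $S_i^{(a)}x_i-z_a=0$, and the reference condition \eqref{eq:ID-reference-paper} is linear in $z_r$. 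Stacking these blocks gives $A_{\mathrm{eq}}$ and $A_{\mathrm{in}}$ in $v$. The local inequality blocks are block diagonal in the $x_i$, the $z$-columns appear only in the consensus and reference rows, and this yields the form in \eqref{eq:qp-constraint-blocks-paper}.

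For the objective, I would go through \eqref{eq:local-objective-paper} term by term and classify each term as affine, quadratic, or constant in $x_i$.
\begin{itemize}
\item The import/export, throughput, curtailment, peak, unrecovered-regen, linear plant-cost and loss-epigraph terms are all affine. They contribute to $q_i$ only; the purely exogenous pieces $\Pres_i$, $\Pav_i$ are constants and can be dropped without changing the minimizer.
\item The quadratic plant term $\DeltaID\frac{a_g(t)}{2}(\pgen_g-\ppump_g)^2$ can be written as $\frac12 x_i\Tr(\DeltaID a_g\pi_m d d\Tr)x_i$, where $d$ has entries $+1$ and $-1$ on the two coordinates. This matrix is PSD because $a_g\ge0$ and $\pi_m>0$.
\item The SoC tracking term \eqref{eq:soc-tracking-paper} is $\frac{w_i^{\mathrm{soc}}}{2}\pi_m(e\Tr x_i-E^{\mathrm{ref}})^2$ with $e$ a selector. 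It gives the PSD rank-one block $w_i^{\mathrm{soc}}\pi_m ee\Tr$, a linear part $-w_i^{\mathrm{soc}}\pi_m E^{\mathrm{ref}}e$, and a constant.
\item The regularizer contributes $\varepsilon_i I$.
\end{itemize}
Summing, $Q_i$ is a nonnegative combination of PSD matrices, hence $Q_i=Q_i\Tr\succeq0$. Since $f_i$ depends only on $x_i$ and the $z_a$ do not enter the cost, $Q=\operatorname{blkdiag}(Q_1,\ldots,Q_N,\mathbf0)$ and $q=\operatorname{col}(q_1,\ldots,q_N,\mathbf0)$. This matches the explicit forms \eqref{eq:local-hessian-explicit-paper} and \eqref{eq:local-linear-explicit-paper}.

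For the final claim, when $\varepsilon_i>0$ we have $Q_i\succeq\varepsilon_i I\succ0$, since adding PSD terms to a positive definite matrix keeps it positive definite. Convexity of the QP then follows: the objective is convex because $Q\succeq0$, and the feasible set is a polyhedron.

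The step I expect to cost the most care is the bookkeeping of the index sets rather than any real difficulty. In particular:
\begin{itemize}
\item The peak cost $c^{\mathrm{pk}}\speak_i(m)$ sits inside the $t$-sum in \eqref{eq:stage-cost-paper}, so its linear coefficient picks up a factor $H\pi_m$ (or must be read as charged once per scenario).
\item The energy block is indexed by $\TsetE$, so $E_i(0,m)$ is fixed by an equality and is not tracked.
\item Absent assets must be handled consistently with the dimension count \eqref{eq:local-vector-dimension-expanded-paper}.
\end{itemize}
None of these affects symmetry, semidefiniteness or linearity, so I would state them as conventions and not dwell on them.
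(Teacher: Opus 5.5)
Your proposal is correct and follows essentially the same route as the paper. Both arguments use the appendix's affine local rows plus the linear consensus and reference rows for the constraints, and both expand $f_i$ to show that $Q_i$ is $\varepsilon_i I_{n_i}$ plus nonnegatively weighted outer products (from the net plant power and SoC tracking), so $Q\succeq0$ and $Q_i\succ0$ when $\varepsilon_i>0$; your bookkeeping (the factor $H$ on the peak coefficient, and dropping the exogenous constants) also matches the paper's explicit $q_i$.
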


\begin{proof}
Appendix~\ref{app:qp-blocks-paper} constructs the local constraint rows directly
from the model equations.  Its assembly \eqref{eq:qp-constraint-blocks-paper}
adds exactly the consensus and reference-angle constraints.
Expanding the existing local objective gives
\eqref{eq:local-hessian-explicit-paper}--\eqref{eq:local-linear-explicit-paper},
up to a constant independent of the decisions.
The expression for $Q_i$ is $\varepsilon_iI_{n_i}$ plus nonnegative weighted
outer products.  Hence $Q_i\succeq0$, with $Q_i\succ0$ if $\varepsilon_i>0$.
The consensus coordinates add a zero Hessian block, so $Q\succeq0$.
\end{proof}

\begin{remark}
Theorem~\ref{thm:convex-qp-paper} classifies the optimization problem.
It does not assert that $\Xset_i$ or the coupled feasible set is nonempty for every state, day-ahead plan, scenario fan, or horizon.
In this work, we do not address recursive feasibility of the receding-horizon controller.
Such a result would require, for example, an appropriate terminal invariant set, a proved reserve construction, or a formally designed hierarchy of constraint relaxations.
\end{remark}

\subsection{Local-QP sparsity}
\label{sec:local-sparsity-paper}

Throughout this subsection, $x_i$ is the complete horizon-and-scenario vector
in \eqref{eq:local-vector-paper}, with dimension $n_i$ given by
\eqref{eq:local-vector-dimension-expanded-paper}.
Let $d_i=1+K_i$, $g_i=|\Gset_i|$, and retain the previously defined
$L_i^{\mathrm{out}}\le K_i$.
Using $M\le HM$ and that each asset indicator in
\eqref{eq:local-vector-dimension-expanded-paper} is at most one gives directly
\begin{equation}
2HM\le n_i\le HM(11+2g_i+3K_i).
\label{eq:local-dimension-paper}
\end{equation}
For matrix assembly, the first-stage condition
\eqref{eq:nonanticipativity-paper} is represented equivalently by
\begin{equation}
u_i(0,m)-u_i(0,1)=0,\qquad i\in\Nset,\quad m=2,\ldots,M.
\label{eq:nonanticipativity-reference-paper}
\end{equation}
These $M-1$ vector equalities imply every pairwise equality in
\eqref{eq:nonanticipativity-paper} and avoid redundant comparisons.

For a real matrix $A$, define
$\operatorname{nnz}(A)\defeq|\{(j,k):A_{jk}\ne0\}|$.  Let
$A_i=\operatorname{col}(A_{i,\mathrm{eq}},A_{i,\mathrm{in}})\in\R^{r_i\times n_i}$
be the matrix of local equalities and inequalities defining $\Xset_i$, and let
$Q_i=\nabla^2 f_i$ be its objective Hessian.  Each two-sided inequality is represented
by its two one-sided inequalities, and first-stage non-anticipativity is represented by
\eqref{eq:nonanticipativity-reference-paper}.  Battery states and line flows are
retained as variables, so no elimination of their equations is implicit in the following
matrix bounds.

\begin{proposition}[Sparsity of the centralized and local QPs]
\label{prop:sparsity-paper}
For the matrix representation just specified and fixed $K_\ell$, there is a constant
$C>0$, independent of the network size, horizon length, scenario count, and area index,
such that
\begin{equation}
r_i+\operatorname{nnz}(A_i)+\operatorname{nnz}(Q_i)
\le C HM(1+K_i+g_i),\qquad i\in\Nset.
\label{eq:local-nnz-bound-paper}
\end{equation}
For $A=\operatorname{col}(A_{\mathrm{eq}},A_{\mathrm{in}})$ and $Q$ in
\eqref{eq:standard-qp-paper},
\begin{equation}
\operatorname{nnz}(A)+\operatorname{nnz}(Q)
=\mathcal O\!\left(HM\left[N+|\Eset|+|\Gset|+
\sum_{i\in\Nset}K_i\right]\right).
\label{eq:nnz-bound-paper}
\end{equation}
The same order holds for the centralized physical formulation after identifying redundant
angle and node copies of the line flows, while retaining battery states and line flows.
If $K_i$ and $g_i$ are bounded by constants independent of $N,H,M$, then
$r_i,n_i=\Theta(HM)$ and the densities of both $A_i$ and $Q_i$ are
$\mathcal O((HM)^{-1})$.  The stacked constraint matrix has
$\Theta(NHM)$ rows and columns, and its density and the stacked Hessian density are
$\mathcal O((NHM)^{-1})$.
\end{proposition}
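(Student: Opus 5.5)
The plan is a row-by-row counting argument over the constraint families that define $\Xset_i$, followed by summation over areas and then a density computation. First, the rows of $A_i$ are grouped by the equation they come from. These are: regen bounds \eqref{eq:regen-bounds-paper}, nodal balance \eqref{eq:nodal-balance-paper}, local flow \eqref{eq:local-flow-paper}, line limits \eqref{eq:line-limit-paper}, loss cuts \eqref{eq:loss-cuts-paper}, battery dynamics and boxes \eqref{eq:bess-dynamics-paper}--\eqref{eq:bess-terminal-paper}, converter split, bounds and ramps \eqref{eq:converter-split-paper}--\eqref{eq:converter-ramp-paper}, plant relations \eqref{eq:plant-net-paper}--\eqref{eq:plant-bounds-paper}, renewable bounds, peak epigraphs \eqref{eq:peak-slack-paper}, the market-energy equalities \eqref{eq:remaining-market-energy}, and non-anticipativity \eqref{eq:nonanticipativity-reference-paper}.

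For each family I record two numbers. The first is the number of rows, which is at most a constant times $HM$, times $K_i$ for edge-indexed families, times $g_i$ for plant families, and times $K_\ell$ for loss cuts. The second is the maximal number of nonzeros per row. Almost every row is stagewise local, touching $O(1)$ variables at a fixed $(t,m)$. The exceptions need care:
\begin{itemize}
\item The nodal balance row touches $O(1+K_i+g_i)$ entries. The plant net injection should be substituted via \eqref{eq:plant-net-paper} as $\sum_g(\pgen_g-\ppump_g)$, since $\pplant_i$ is not a separate block in \eqref{eq:local-vector-paper}.
\item Each market-energy row touches $|G_{n,h}|\le 4$ stage entries, and there are at most $M\lceil H/4\rceil+M$ such rows.
\item Each non-anticipativity row has two nonzeros, and there are $(M-1)\dim u_i(0,\cdot)=O(M(1+g_i))$ such rows.
\end{itemize}
Summing rows times row-nnz then gives $r_i+\operatorname{nnz}(A_i)\le C_1HM(1+K_i+g_i)$ with $C_1$ depending only on $K_\ell$ and on fixed structural counts.

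For the Hessian I would use the explicit form of $Q_i$ from Theorem~\ref{thm:convex-qp-paper}. The quadratic terms are three: $\varepsilon_i I$, which is diagonal with $n_i$ entries; the plant term $\tfrac{a_g}{2}(\pgen_g-\ppump_g)^2$, which gives a $2\times2$ block per $(g,t,m)$; and the SoC tracking, which is diagonal in $E_i(t+1,m)$. So $\operatorname{nnz}(Q_i)\le n_i+4g_iHM$. Combined with \eqref{eq:local-dimension-paper}, this is $O(HM(1+K_i+g_i))$, which proves \eqref{eq:local-nnz-bound-paper}.

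The global bound \eqref{eq:nnz-bound-paper} follows by summing over $i$. The consensus rows \eqref{eq:ID-consensus-paper} contribute $HM\sum_i(1+K_i)$ rows with two nonzeros each, and \eqref{eq:ID-reference-paper} contributes $HM$ rows. The terms $\sum_i g_i=|\Gset|$ and $\sum_iK_i=2|\Eset|$ then give the stated order. For the centralized physical formulation the same count applies with one angle per area and one flow per edge. Each flow definition row has three nonzeros, and each nodal balance touches incident flows, for $O(HM(N+|\Eset|+|\Gset|))$ in total.

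For the density claims under bounded $K_i,g_i$, I combine the lower bound $n_i\ge 2HM$ with the upper bound in \eqref{eq:local-dimension-paper}, which gives $n_i=\Theta(HM)$. Rows satisfy $r_i\ge HM$, for instance the nodal balances, and $r_i=O(HM)$ from the count above. The density is then $\operatorname{nnz}/(r_in_i)=O(HM)/\Theta((HM)^2)$. The stacked case is the same computation with $\Theta(NHM)$ rows and columns, noting that the $z$ block adds $NHM$ columns.

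The main obstacle I expect is bookkeeping rather than mathematics. The nodal-balance row has non-constant width, and the variable $\pplant_i$ must be handled via substitution so that the nnz bound depends on $g_i$ and $K_i$ only linearly. I also need to ensure that no row family is indexed by pairs of stages, which would cost $H^2$. The market-energy rows are the candidate for this, but they are harmless since an hour spans at most four quarter-hours.
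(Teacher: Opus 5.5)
Your proposal is correct and follows the paper's argument: row-by-row counting of $A_i$ with the $(5+2g_i+K_i)$-wide balance row, $\operatorname{nnz}(Q_i)\le n_i+4g_iHM$ from the explicit Hessian, summation using $\sum_i g_i=|\Gset|$ and $\sum_i K_i=2|\Eset|$, and then the density quotients. The differences are cosmetic. You bound the market-energy rows via $|G_{n,h}|\le 4$ rather than via disjointness of the $G_{n,h}$. You count the physical formulation directly rather than by merging columns. For the lower bound on $r_i$ you use the $HM$ nodal-balance rows instead of the $2HM$ regenerative-bound rows.
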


\begin{proof}
Following the matrix rows in Appendix~\ref{app:qp-blocks-paper}, a balance row has at most $6+2g_i+K_i$ nonzero entries.
Each flow equation has three, each scalar bound has one, and each loss-cut row has at most two.
Battery dynamics have four entries per row, whereas converter split, ramp, and peak rows have at most three.
Their numbers of rows are bounded by a constant times $HM(1+K_i+g_i+K_\ell L_i^{\mathrm{out}})$, including the $(H+1)M$ energy coordinates and the $M$ initial-energy and peak entries.
For market-energy rows, the sets $G_{n,h}$ are disjoint, so $\sum_h|G_{n,h}|\le H$ gives at most $HM$ nonzeros per converter area.
The non-anticipativity block contributes $2(M-1)\dim u_i(0,1)$ nonzeros, with $\dim u_i(0,1)\le5+2g_i$.
Thus, for fixed $K_\ell$,
\[
r_i\le\operatorname{nnz}(A_i)\le C_1HM(1+K_i+g_i).
\]
The first inequality holds because every retained row has a nonzero coefficient.
By \eqref{eq:local-hessian-explicit-paper}, all non-plant Hessian terms are
diagonal, and each plant contributes at most four entries per stage and scenario.
Consequently,
$\operatorname{nnz}(Q_i)\le n_i+4g_iHM$.
Combining this with \eqref{eq:local-dimension-paper} proves
\eqref{eq:local-nnz-bound-paper}.  The diagonal ADMM penalty in
\eqref{eq:sigma-paper} adds no more than $d_iHM$ entries.

In \eqref{eq:qp-constraint-blocks-paper}, every consensus row has two nonzeros
and each reference row has one.  Hence
\[
\operatorname{nnz}(A)=\sum_i\operatorname{nnz}(A_i)+2HM\sum_i d_i+HM,
\qquad
\operatorname{nnz}(Q)=\sum_i\operatorname{nnz}(Q_i).
\]
The identities $\sum_i g_i=|\Gset|$ and $\sum_iK_i=2|\Eset|$ give
\eqref{eq:nnz-bound-paper}.  Identifying redundant angle and flow copies in the
physical formulation merges columns and removes consensus rows without increasing
the number of nonzeros.  Diagonal and plant Hessian blocks retain the same structure.

If $K_i,g_i$ are uniformly bounded, \eqref{eq:local-dimension-paper} yields
$n_i=\Theta(HM)$.  The $2HM$ regenerative-bound rows give
$r_i\ge2HM$, and the preceding upper bound gives $r_i=\Theta(HM)$.
Dividing the local nonzero bounds by $r_in_i$ and $n_i^2$ proves the local densities.
The stacked formulation has $\sum_i n_i+NHM=\Theta(NHM)$ columns and
$\sum_i r_i+HM\sum_i d_i+HM=\Theta(NHM)$ rows.
Its nonzero bound is $\mathcal O(NHM)$, which gives the stated global densities.
The same orders hold for the physical formulation, which retains area dispatch
coordinates and regenerative bounds.
\end{proof}

This result concerns storage of the original sparse matrices.
It does not bound fill-in during matrix factorization or establish a solver run-time bound.
Those properties also depend on elimination ordering, conditioning, and the optimization algorithm.

\section{Consensus ADMM}
\label{sec:admm-paper}

This section derives the distributed coordination algorithm directly from the finite-scenario problem.
It states the augmented Lagrangian and local quadratic programs, derives the consensus-angle and multiplier updates, defines numerical residuals, and proves centralized optimality of the limiting dispatch under the classical exact-solve assumptions.

\subsection{Augmented Lagrangian and local QPs}

Attach a multiplier $y_i^{(a)}\in\R^{HM}$ to each angle-copy equality.  For fixed
$\rho>0$, the augmented Lagrangian is
\begin{equation}
\begin{aligned}
\mathcal L_\rho(x,z,y)=\sum_{i\in\Nset}\Bigg[f_i(x_i)
+\sum_{a\in\{i\}\cup\Nbr{i}}\Big(&y_i^{(a)\top}(S_i^{(a)}x_i-z_a)\\
&+\frac{\rho}{2}\|S_i^{(a)}x_i-z_a\|^2\Big)\Bigg].
\end{aligned}
\label{eq:augmented-lagrangian-paper}
\end{equation}
At iteration $k$, all areas solve in parallel
\begin{equation}
x_i^{k+1}=\arg\min_{x_i\in\Xset_i}
f_i(x_i)+\sum_{a\in\{i\}\cup\Nbr{i}}
\left[y_i^{(a),k\top}(S_i^{(a)}x_i-z_a^k)
+\frac{\rho}{2}\|S_i^{(a)}x_i-z_a^k\|^2\right].
\label{eq:admm-x-paper}
\end{equation}
Write $f_i(x_i)=\tfrac12x_i\Tr Q_ix_i+q_i\Tr x_i$ up to a constant and define
\begin{align}
\Sigma_i&=\sum_{a\in\{i\}\cup\Nbr{i}}S_i^{(a)\top}S_i^{(a)},
\label{eq:sigma-paper}\\
w_i^k&=\sum_{a\in\{i\}\cup\Nbr{i}}S_i^{(a)\top}
(y_i^{(a),k}-\rho z_a^k).
\label{eq:w-paper}
\end{align}
Since the selected blocks are disjoint, $\Sigma_i$ is diagonal and equal to one exactly on
the angle-copy coordinates.
Therefore,~\eqref{eq:admm-x-paper} cen be written as the local following QP
\begin{equation}
x_i^{k+1}=\arg\min_{x_i\in\Xset_i}
\frac12x_i\Tr(Q_i+\rho\Sigma_i)x_i+(q_i+w_i^k)\Tr x_i.
\label{eq:local-admm-qp-paper}
\end{equation}

\subsection{Consensus and multiplier updates}

Let $\mathcal H_a=\{a\}\cup\Nbr{a}$ be the holders of angle $a$ and $d_a=|\mathcal H_a|=1+K_a$.
For $a\ne r$, collect the $z_a$-dependent terms of \eqref{eq:augmented-lagrangian-paper} and differentiate $0=\sum_{i\in\mathcal H_a}\left[-y_i^{(a),k}+\rho(z_a-S_i^{(a)}x_i^{k+1})\right]$.
Solving the resulting linear equation for $z_a$, which is equivalent to minimizing \eqref{eq:augmented-lagrangian-paper} over $z_a$, yields
\begin{equation}
z_a^{k+1}=\frac1{d_a}\sum_{i\in\mathcal H_a}
\left(S_i^{(a)}x_i^{k+1}+\frac1\rho y_i^{(a),k}\right),
\qquad z_r^{k+1}=0.
\label{eq:admm-z-paper}
\end{equation}
For the reference area, the feasible set contains only $z_r=0$, so its constrained minimizer is exactly zero.
The multiplier update is
\begin{equation}
y_i^{(a),k+1}=y_i^{(a),k}
+\rho\bigl(S_i^{(a)}x_i^{k+1}-z_a^{k+1}\bigr).
\label{eq:admm-y-paper}
\end{equation}
Equation~\eqref{eq:admm-y-paper} is gradient ascent on the unscaled multiplier with step $\rho$ for the corresponding equality constraint.
Thus one coordination step exchanges only boundary-angle trajectories between adjacent areas.
Converter setpoints, battery states, costs, and internal forecasts need not be sent to other areas.

\subsection{Residuals and stopping quantities}

Stack all copy mismatches to obtain the \emph{primal} residual
\begin{equation}
r^{k+1}=\operatorname{col}_{i,a}
\bigl(S_i^{(a)}x_i^{k+1}-z_a^{k+1}\bigr).
\label{eq:primal-residual-paper}
\end{equation}
Since each $z_a$ is repeated $d_a$ times in the constraint stack, the corresponding \emph{dual} residual norm is
\begin{equation}
\|s^{k+1}\|=
\rho\left(
\sum_{a\in\Nset}d_a\|z_a^{k+1}-z_a^k\|^2
\right)^{1/2}.
\label{eq:dual-residual-paper}
\end{equation}
Following the standard absolute/relative criteria~\cite{boyd2011}, the ADMM algorithm stops when $\|r^{k+1}\|_2\le\epsilon^{\mathrm{pri}}$ and $\|s^{k+1}\|_2\le\epsilon^{\mathrm{dual}}$, where
\begin{align}
\epsilon^{\mathrm{pri}} &= \sqrt{p}\,\epsilon^{\mathrm{abs}}
+ \epsilon^{\mathrm{rel}}\max\Bigl\{\bigl\|(S_i^{(a)}x_i)_{i,a}\bigr\|_2,\ \bigl\|(z_a)_{i,a}\bigr\|_2\Bigr\},\\
\epsilon^{\mathrm{dual}} &= \sqrt{p}\,\epsilon^{\mathrm{abs}}
+ \epsilon^{\mathrm{rel}}\bigl\|(y_i^{(a)})_{i,a}\bigr\|_2 .
\end{align}

To impose a prescribed accuracy on every angle copy, we supplement the above criteria by the infinity norm of the primal residual
\begin{equation}
c_\theta^{k+1}
=\max_{\substack{i\in\Nset\\a\in\{i\}\cup\Nbr{i}}}
\bigl\|S_i^{(a)}x_i^{k+1}-z_a^{k+1}\bigr\|_\infty
=\|r^{k+1}\|_\infty
<\epsilon_\infty^\theta,
\label{eq:componentwise-consensus-paper}
\end{equation}
with some fixed tolerance $\epsilon_\infty^\theta>0$.
Infinity-norm primal-residual tests are standard in operator-splitting QP solvers
\cite[Sec.~3.4]{stellato2020}.
The Euclidean test only guarantees each error is at most $\epsilon^{\mathrm{pri}}$, whose value depends on problem size and iterate scale.
Equation~\eqref{eq:componentwise-consensus-paper} instead bounds every scalar angle-copy error, at every stage and scenario, by the same prescribed tolerance.
Its physical meaning follows directly from \eqref{eq:local-flow-paper}.
For $e=(i,j)$ in its stored orientation, exact local flow equations and the triangle inequality give
\begin{equation}
\begin{aligned}
\|p_{e,i}^{\mathrm{flow},k+1}-B_e(z_i^{k+1}-z_j^{k+1})\|_\infty
&\le2B_ec_\theta^{k+1},\\
\|p_{e,i}^{\mathrm{flow},k+1}-p_{e,j}^{\mathrm{flow},k+1}\|_\infty
&\le4B_ec_\theta^{k+1}.
\end{aligned}
\label{eq:consensus-flow-error-paper}
\end{equation}
Thus the test controls the disagreement of the two computed line-flow trajectories.

\subsection{Convergence and centralized optimality}

\begin{assumption}
\label{ass:admm-paper}
Suppose that Problem~\eqref{eq:ID-paper} has a finite optimal value and its unaugmented Lagrangian has a
saddle point.  Each local update \eqref{eq:admm-x-paper} is solved exactly, the consensus
update is \eqref{eq:admm-z-paper}, and $\rho>0$ is fixed.
\end{assumption}

\begin{theorem}[ADMM convergence]
\label{thm:admm-convergence-paper}
Under \cref{ass:standing-paper,ass:admm-paper}, the iterations
\eqref{eq:admm-x-paper}, \eqref{eq:admm-z-paper}, and \eqref{eq:admm-y-paper} satisfy
\begin{equation}
r^k\to0,
\qquad
\sum_{i\in\Nset}f_i(x_i^k)\to J^\star,
\qquad
y^k\to y^\star,
\label{eq:admm-convergence-result}
\end{equation}
where $J^\star$ is the optimal value of \eqref{eq:ID-paper} and $y^\star$ is a dual
solution.  Every accumulation point of the primal iterates is optimal.  Through
\cref{cor:central-equivalence}, the limiting physical dispatch is also an optimum of the
centralized problem.
\end{theorem}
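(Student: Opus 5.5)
The plan is to recast \eqref{eq:ID-paper} as a two-block problem of the form treated in \cite{boyd2011} and then invoke the classical convergence theorem for two-block ADMM. Take the first block $x=(x_i)_{i\in\Nset}$ with objective $F(x)=\sum_i f_i(x_i)+\sum_i I_{\Xset_i}(x_i)$. Take the second block $z=(z_a)_{a\in\Nset}$ with objective $G(z)=I_{\{z_r=0\}}(z)$. The coupling is the linear constraint $Ax+Bz=0$, where the rows of $A$ are the selectors $S_i^{(a)}$ and $B$ stacks $-I_{HM}$ blocks, one copy of $z_a$ for each holder $i\in\mathcal H_a$.

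First I will check the hypotheses. By \cref{prop:local-polyhedron}, each $\Xset_i$ is a closed convex polyhedron, so $I_{\Xset_i}$ is closed, proper and convex whenever $\Xset_i\neq\emptyset$. Nonemptiness follows from the finite optimal value in \cref{ass:admm-paper}. By \cref{thm:convex-qp-paper}, each $f_i$ is a convex quadratic, hence $F$ is closed, proper and convex. $G$ is the indicator of a subspace and is also closed, proper and convex. Together with the saddle-point part of \cref{ass:admm-paper}, these are exactly the two assumptions of \cite[Sec.~3.2]{boyd2011}.

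Second, I will verify that \eqref{eq:admm-x-paper}, \eqref{eq:admm-z-paper} and \eqref{eq:admm-y-paper} are precisely the $x$-, $z$- and $y$-updates of unscaled ADMM on this splitting. The $x$-minimization of $\mathcal L_\rho$ separates across areas because each $x_i$ enters only its own terms, which gives \eqref{eq:admm-x-paper}. The $z$-minimization separates across $a$; it is strongly convex in $z_a$ with modulus $\rho d_a$, and its unique minimizer is the average \eqref{eq:admm-z-paper}, with $z_r=0$ forced by $G$. The standard result then yields $r^k\to0$, convergence of the objective $F(x^k)+G(z^k)\to J^\star$, and $y^k\to y^\star$. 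Since $x^k\in\Xset_i$ and $z_r^k=0$ at every iterate, the indicator terms vanish and the objective statement becomes $\sum_i f_i(x_i^k)\to J^\star$.

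Third, I will establish optimality of accumulation points. Suppose a subsequence $(x^{k_j},z^{k_j})\to(\bar x,\bar z)$. Closedness of $\Xset_i$ and of $\{z_r=0\}$ gives feasibility of the limit for \eqref{eq:ID-local-paper} and \eqref{eq:ID-reference-paper}. The limit $r^k\to0$ gives the consensus constraint \eqref{eq:ID-consensus-paper}. Continuity of the quadratic $f_i$ gives $\sum_i f_i(\bar x_i)=J^\star$, so the limit is optimal. \cref{cor:central-equivalence} then maps $(\bar x,\bar z)$ to an optimum of the centralized problem.

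I expect the main obstacle to be the residual bookkeeping in Boyd's theorem. That theorem concerns the objective $p^k=F(x^k)+G(z^k)$, and its $z$-iterates need not be bounded a priori. To use the $x$-component of a limit point I must make sure $z^k$ converges along the subsequence. For this I will invoke $r^k\to0$: since every $z_a$ is held by its own area $a$ through $S_a^{(a)}x_a=z_a$, bounded $x$-subsequences force bounded $z$-subsequences. The argument therefore only asserts optimality of accumulation points, not existence; existence would follow if each $\Xset_i$ is bounded, which holds under the finite box bounds of \cref{ass:standing-paper} except for the angles.
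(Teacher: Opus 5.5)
Your proposal is correct and follows the paper's proof. It uses the same two-block splitting with $F=\sum_i\bigl(f_i+\iota_{\Xset_i}\bigr)$ and $G=\iota_{\{z_r=0\}}$, identifies the three updates with standard ADMM, and invokes \cite[Sec.~3.2]{boyd2011}; your closedness-plus-continuity argument just spells out the accumulation-point optimality that the paper asserts directly. One side remark: your closing aside is inaccurate, since $\Xset_i$ is unbounded not only in the angles but also in $q_i^\pm$, $\speak_i$ and $\ell_e$, though nothing in your proof depends on that remark.
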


\begin{proof}
Define the closed proper convex functions $F(x)=\sum_{i\in\Nset}\bigl(f_i(x_i)+\iota_{\Xset_i}(x_i)\bigr)$, $G(z)=\iota_{\{z:z_r=0\}}(z)$, where $\iota_C$ is zero on $C$ and $+\infty$ outside $C$.
Closedness and convexity follow from \cref{prop:local-polyhedron,thm:convex-qp-paper}
Properness follows from the assumed existence of a feasible optimum.
Stack the selectors into $A$ and let $B$ repeat each $-z_a$ in the rows associated with its holders.
Then \eqref{eq:ID-paper} is exactly the two-block problem $\min_{x,z}\ F(x)+G(z)$ subject to $Ax+Bz=0$.
The $x$-minimization of its augmented Lagrangian separates by area and is precisely
\eqref{eq:admm-x-paper}.
The constrained $z$-minimization is \eqref{eq:admm-z-paper} and the multiplier ascent is \eqref{eq:admm-y-paper}.
Hence these iterations are the standard two-block ADMM iterations.
The saddle-point assumption is the regularity hypothesis of the classical ADMM convergence theorem~\cite[Sec.~3.2]{boyd2011}.
Applying that theorem gives primal-residual convergence, objective convergence, and dual convergence in \eqref{eq:admm-convergence-result}.
It also implies optimality of every primal accumulation point.
Finally,~\cref{cor:central-equivalence} maps any such point to a centralized feasible point with the same globally minimal objective.
\end{proof}

\begin{remark}
Theorem~\ref{thm:admm-convergence-paper} concerns exact local minimizers.
Established inexact-ADMM results allow local errors whose norms are summable over the outer iterations, but a fixed loose error without such control is not covered by \cref{thm:admm-convergence-paper}.
Numerical tolerances, scaling, stopping, and recovery are therefore Part~II implementation matters~\cite{norooziP1II} and are tested in Part~III~\cite{norooziP1III}.
\end{remark}

While the companion paper Part~II~\cite{norooziP1II} is devoted to numerical implementation of Problem~\ref{prob:intraday_planning} via the consensus ADMM as in Section~\ref{sec:admm-paper}, we present the workflow of the numerical algorithm in pseducodes, solving  the  
\begin{algorithm}[H]
\caption{Consensus ADMM for the intraday dispatch problem (ID)}
\label{alg:admm}
\begin{algorithmic}[1]
\Require Network $(\Nset,\Eset)$; local data $(Q_i,q_i,\Xset_i)$; penalty $\rho$;
tolerances $\epsilon^{\mathrm{abs}},\epsilon^{\mathrm{rel}}, \epsilon_\infty^\theta$; maximum iteration $K_{\max}$
\State \textbf{init} $x_i^{0}$, $z_a^{0}$, $y_i^{(a),0}$
\State build and factorise the local programs for $Q_i + \rho^0\Sigma_i$,
\For{$k=0,1,2,\dots,K_{\max}-1$}
  \ForAll{$i\in\Nset$ \textbf{in parallel}}
    \State $w_i^{k}\gets$ compute~\eqref{eq:w-paper}
    \Comment{only the linear term changes}
    \State $x_i^{k+1}\gets$ solve local QP~\eqref{eq:local-admm-qp-paper}
  \EndFor
  \ForAll{$a\in\Nset$} \Comment{coordination: one exchange with neighbors}
    \State $z_a^{k+1}\gets$ compute~\eqref{eq:admm-z-paper}
  \EndFor
  \State $z_r^{k+1}\gets 0$ \Comment{reference bus}
  \ForAll{$i\in\Nset,\ a\in\{i\}\cup\Nbr{i}$ \textbf{in parallel}}
    \State $y_i^{(a),k+1}\gets$ compute~\eqref{eq:admm-y-paper}
  \EndFor
  \State compute $\|r^{k+1}\|,\|s^{k+1}\|$ and $c_{\theta}^{k+1}$, resp., via~\eqref{eq:primal-residual-paper},~\eqref{eq:dual-residual-paper},~\eqref{eq:componentwise-consensus-paper}
  \If{$\|r^{k+1}\|\le\epsilon^{\mathrm{pri}}$ \textbf{and}
       $\|s^{k+1}\|\le\epsilon^{\mathrm{dual}}$ \textbf{and}
       $c_{\theta}^{k+1}<\epsilon_\infty^\theta$}
    \State \textbf{return} $\{x_i^{k+1}\},\{z_a^{k+1}\}$ \Comment{converged}
  \EndIf
\EndFor
\State \textbf{return} $\{x_i^{K_{\max}}\},\{z_a^{K_{\max}}\}$ \Comment{maximum iteration reached}
\end{algorithmic}
\end{algorithm}

\section{Receding-Horizon Application and KPI Accumulation}
\label{sec:kpi}

This section closes the loop between the finite-horizon optimizer and physical operation.
It defines which common first-stage action is applied, how the measured battery state advances
to the next control instant, and how economic and physical key performance indices are
accumulated without repeatedly counting prediction-horizon quantities.

\subsection{Applied first stage and state propagation}

Let $(x_{i,n}^\star,z_{a,n}^\star)$ solve \eqref{eq:ID-paper} at control instant
$\tau_n$.  Only the first-stage control, common to all scenarios by
\eqref{eq:nonanticipativity-paper}, is applied:
\begin{equation}
u_{i,n}=u_{i,n}^\star(0,m),\qquad m\in\Mset.
\label{eq:applied-first-stage-paper}
\end{equation}
The physical battery state propagates as
\begin{equation}
E_{i,n+1}=E_{i,n}+\DeltaID\left(
\eta_i^{\mathrm c}\pc_{i,n}
-\frac{1}{\eta_i^{\mathrm d}}\pd_{i,n}
\right).
\label{eq:closed-loop-soc-paper}
\end{equation}
The next optimization is initialized with this measured state.

\subsection{Realized economic and physical indices}

Prediction-horizon costs must not be summed repeatedly as realized performance.  Realized
KPIs are accumulated only from the applied first stages and realized prices.  Over
closed-loop indices $n=0,\ldots,N_{\mathrm{cl}}-1$, corresponding to control instants
$\tau_n$, define
\begin{align}
C^{\mathrm{grid}}
&=\sum_n\sum_{i\in\Iset}\DeltaID
\left(c_{i,n}^E q_{i,n}^+
-c_{i,n}^{E,\mathrm x}q_{i,n}^-\right),
\label{eq:kpi-grid-cost}\\
C^{\mathrm{plant}}
&=\sum_n\sum_{g\in\Gset}\DeltaID\left[
c_{g,n}^{\mathrm g}p_{g,n}^{\mathrm g}
+c_{g,n}^{\mathrm p}p_{g,n}^{\mathrm{pump}}
+\frac{a_{g,n}}2(p_{g,n}^{\mathrm g}-p_{g,n}^{\mathrm{pump}})^2
\right],
\label{eq:kpi-plant-cost}\\
E^{\mathrm{imp}}&=\sum_{n,i\in\Iset}q_{i,n}^+\DeltaID,
\qquad
E^{\mathrm{exp}}=\sum_{n,i\in\Iset}q_{i,n}^-\DeltaID,
\label{eq:kpi-grid-energy}\\
E^{\mathrm{BESS,thr}}
&=\sum_{n,i\in\Bset}(p_{i,n}^{\mathrm c}+p_{i,n}^{\mathrm d})\DeltaID,
\label{eq:kpi-bess-throughput}\\
E^{\mathrm{curt}}
&=\sum_{n,i\in\Rset}(P_{i,n}^{\mathrm{res,max}}-p_{i,n}^{\mathrm{res}})\DeltaID,
\label{eq:kpi-curtailment}\\
E^{\mathrm{reg,av}}
&=\sum_{n,i}\Pav_{i,n}\DeltaID,
\quad
E^{\mathrm{reg,rec}}=\sum_{n,i}p_{i,n}^{\mathrm{reg}}\DeltaID,
\quad
E^{\mathrm{reg,spill}}=\sum_{n,i}(\Pav_{i,n}-p_{i,n}^{\mathrm{reg}})\DeltaID,
\label{eq:kpi-regen-energy}\\
\etareg_{\mathrm{net}}
&=\frac{E^{\mathrm{reg,rec}}}{E^{\mathrm{reg,av}}},
\qquad E^{\mathrm{reg,av}}>0,
\label{eq:kpi-regen-ratio}\\
P_{i,n}^{\mathrm{peak}}
&=\max_{0\le j\le n}q_{i,j}^+,
\qquad
\chi_e^{\mathrm{line}}=\max_n\frac{|p_{e,n}|}{\bar P_e}.
\label{eq:kpi-peak-line}
\end{align}
Area-wise recovery ratios are defined analogously whenever the area denominator is positive.
The realized dispatch cost is $C^{\mathrm{grid}}+C^{\mathrm{plant}}$. 

\begin{proposition} \label{prop:kpi-identities}
For every feasible applied sequence, we have
\begin{equation}
E^{\mathrm{reg,av}}=E^{\mathrm{reg,rec}}+E^{\mathrm{reg,spill}},
\qquad
0\le\etareg_{\mathrm{net}}\le1
\label{eq:regen-kpi-identity}
\end{equation}
when $E^{\mathrm{reg,av}}>0$.  Moreover,
$P_{i,n+1}^{\mathrm{peak}}\ge P_{i,n}^{\mathrm{peak}}$ for every $i$ and $n$.
\end{proposition}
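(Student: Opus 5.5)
The plan is to argue term by term from the definitions \eqref{eq:kpi-regen-energy}--\eqref{eq:kpi-peak-line}, using only that each applied first stage satisfies the regenerative bound \eqref{eq:regen-bounds-paper} at $t=0$. By \eqref{eq:applied-first-stage-paper} and the non-anticipativity constraint \eqref{eq:nonanticipativity-paper}, the applied quantity $p^{\mathrm{reg}}_{i,n}$ is the common stage-zero value $\preg_i(0,m)$ of the problem issued at $\tau_n$. The stage-zero data $\Pav_i(0,m)$ equal the scenario-independent causal value $\widehat P^{\mathrm{av}}_{i,n}$, which is the $\Pav_{i,n}$ entering the KPI sums. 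Hence feasibility gives
\[
0\le p^{\mathrm{reg}}_{i,n}\le \Pav_{i,n}
\]
for every area $i$ and closed-loop index $n$. For areas with no regenerative channel, both sides are zero.

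For the identity, I will add the definitions of $E^{\mathrm{reg,rec}}$ and $E^{\mathrm{reg,spill}}$ termwise. The sums are finite and run over the same index set $(n,i)$, so linearity gives $\sum_{n,i}\bigl(p^{\mathrm{reg}}_{i,n}+\Pav_{i,n}-p^{\mathrm{reg}}_{i,n}\bigr)\DeltaID=E^{\mathrm{reg,av}}$.

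For the ratio bound, I will sum the termwise inequality above multiplied by $\DeltaID>0$. This yields $0\le E^{\mathrm{reg,rec}}\le E^{\mathrm{reg,av}}$, so dividing by $E^{\mathrm{reg,av}}>0$ gives $0\le\etareg_{\mathrm{net}}\le1$. Equivalently, $E^{\mathrm{reg,spill}}\ge0$ combined with the identity gives the upper bound.

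For peak monotonicity, note that $P^{\mathrm{peak}}_{i,n+1}=\max\{P^{\mathrm{peak}}_{i,n},q^+_{i,n+1}\}$, because the index set $\{0,\dots,n\}$ is contained in $\{0,\dots,n+1\}$. A maximum over a larger finite set is at least the maximum over the smaller one, so $P^{\mathrm{peak}}_{i,n+1}\ge P^{\mathrm{peak}}_{i,n}$.

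The only delicate step is the first one: justifying that the realized $\Pav_{i,n}$ in the KPI is exactly the stage-zero bound used in the optimization. I would settle this by invoking the stage-zero convention of Section~\ref{sec:intraday-model}, under which stage-zero train data are the measured values copied across all scenarios, together with \eqref{eq:applied-first-stage-paper}. I expect the rest to be routine.
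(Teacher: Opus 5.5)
Your proposal is correct and follows essentially the same route as the paper: you use the termwise bound $0\le p^{\mathrm{reg}}_{i,n}\le \Pav_{i,n}$, linearity of the finite sums for the identity and the ratio bounds, and the superset argument for monotonicity of the running maximum. You also justify explicitly that the $\Pav_{i,n}$ in the KPI is the scenario-independent stage-zero datum $\widehat P^{\mathrm{av}}_{i,n}$, via \eqref{eq:applied-first-stage-paper} and the stage-zero convention; the paper leaves this implicit, and your argument is the right way to close that step.
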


\begin{proof}
By~\eqref{eq:regen-bounds-paper}, we have $0\le p_{i,n}^{\mathrm{reg}}\le P_{i,n}^{\mathrm{av}}$.
Subtracting accepted from available power, multiplying by $\DeltaID$, and summing over all areas and control instants gives the equality in \eqref{eq:regen-kpi-identity}.
Moreover, nonnegativity of all three energies follows from the same bounds.
Dividing $0\le E^{\mathrm{reg,rec}}\le E^{\mathrm{reg,av}}$ by the strictly positive denominator gives the ratio bounds.
Finally, the set over which the maximum defining $P_{i,n}^{\mathrm{peak}}$ is taken is contained in the corresponding set at $n+1$.
a maximum over a superset cannot be smaller.
\end{proof}

\begin{theorem}
\label{thm:closed-loop-causality}
Suppose that the current state, active day-ahead interface, scenario trajectories, and all exogenous coefficients used at instant $\tau_n$ are $\mathcal F_{\tau_n}$-measurable, stage zero equals the current measurement.
Then $u_n$ in \eqref{eq:applied-first-stage-paper} is $\mathcal F_{\tau_n}$-measurable.
If \eqref{eq:closed-loop-soc-paper} is used for state propagation, the statement holds inductively at every later control instant for which the optimization remains feasible.
\end{theorem}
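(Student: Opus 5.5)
The argument views the optimizer as a deterministic map from issue-time data to the applied control, and concludes by composing measurable functions. Fix $\tau_n$ and collect every datum entering Problem~\ref{prob:intraday_planning} into a single finite-dimensional vector $\zeta_n$: the measured state $(E_i^0,p_{i,n}^{\mathrm{conv,prev}},\widehat P_{i,n}^{\mathrm{mot}},\widehat P_{i,n}^{\mathrm{av}})$, the active day-ahead quantities in \eqref{eq:da-interface}, the scenario trajectories $\omega^{(m)}$ with their weights $\pi_m$, and all prices, envelopes and limits. By hypothesis each component is $\mathcal F_{\tau_n}$-measurable, so $\zeta_n$ is as well. The constraint data $(A_{\mathrm{eq}},b_{\mathrm{eq}},A_{\mathrm{in}},b_{\mathrm{in}})$ and cost data $(Q,q)$ of \eqref{eq:standard-qp-paper} are, by the explicit construction behind \cref{thm:convex-qp-paper}, affine or polynomial (hence Borel) functions of $\zeta_n$.

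The key step is to show that the applied control is a Borel function of the QP data. Two cases arise.

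\emph{Strictly convex case} ($\varepsilon_i>0$). Here the feasible QP has a unique minimizer $v^\star(\zeta)$. I would show that $\zeta\mapsto v^\star(\zeta)$ is Borel on the Borel set of feasible data. This follows from standard parametric optimization: the graph of the feasible-set correspondence is closed, and the objective is continuous. Uniqueness then makes the argmin correspondence single-valued with a Borel graph, which gives Borel measurability.

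\emph{Degenerate case} ($\varepsilon_i=0$). Here I would invoke a measurable selection theorem (Kuratowski--Ryll-Nardzewski). The argmin correspondence of a feasible bounded-below QP is nonempty and closed-valued, and it is weakly measurable because it is the argmin of a Carathéodory integrand over a measurable closed-valued correspondence. The theorem then yields a Borel selector. The statement implicitly refers to the solver's rule for choosing the solution, so I would take that rule to be such a selector, or note that a deterministic solver acting on the data realizes one. Making this precise is the main obstacle.

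With a Borel selector in hand, $u_n=\Pi\, v^\star(\zeta_n)$, where $\Pi$ is the linear projection onto the stage-zero block of any scenario. This is well defined by \eqref{eq:nonanticipativity-paper}, since the block is common to all $m$. Stage-zero data equal the current measurement, so no future realization enters through the first stage. Hence $u_n$ is a composition of Borel maps with an $\mathcal F_{\tau_n}$-measurable vector, and is therefore $\mathcal F_{\tau_n}$-measurable.

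For the inductive claim, assume $E_{i,n}$ and $u_n$ are $\mathcal F_{\tau_n}$-measurable. By \eqref{eq:closed-loop-soc-paper}, $E_{i,n+1}$ is an affine function of $(E_{i,n},\pc_{i,n},\pd_{i,n})$, so it is $\mathcal F_{\tau_n}$-measurable. Since $\mathcal F_{\tau_n}\subseteq\mathcal F_{\tau_{n+1}}$ (information is not forgotten, by the definition of issue time), it is also $\mathcal F_{\tau_{n+1}}$-measurable. The same holds for $p_{i,n+1}^{\mathrm{conv,prev}}=\pconv_{i,n}$. Together with the hypothesis on the newly issued data at $\tau_{n+1}$, the full data vector $\zeta_{n+1}$ is $\mathcal F_{\tau_{n+1}}$-measurable. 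Applying the first part again on the event where the problem at $\tau_{n+1}$ is feasible then completes the induction.
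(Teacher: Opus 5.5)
Your proposal is correct and follows the same route as the paper. The applied first-stage block is treated as a deterministic function of $\mathcal F_{\tau_n}$-measurable issue-time data, and the induction runs through the affine state update \eqref{eq:closed-loop-soc-paper}. The paper's proof simply asserts that $u_n$ is measurable because all QP data are, so your additions fill in steps it leaves implicit without changing the argument: the Borel-graph argument for the unique-minimizer case, the Kuratowski--Ryll-Nardzewski selection when some $\varepsilon_i=0$ (where the optimizer need not be unique and the claim only makes sense for a measurable tie-breaking rule), and your explicit use of $\mathcal F_{\tau_n}\subseteq\mathcal F_{\tau_{n+1}}$.
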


\begin{proof}
At instant $\tau_n$, all matrices, vectors, and bounds of \eqref{eq:ID-paper} are functions of $\mathcal F_{\tau_n}$-measurable quantities.
Therefore, $u_n$ is $\mathcal F_{\tau_n}$-measurable.
The state update~\eqref{eq:closed-loop-soc-paper} is a deterministic function of the current $\mathcal F_{\tau_n}$-measurable state and control, so $E_{n+1}$ is available at the next instant.
Repeating the same argument proves the result by induction.
The induction cannot continue through an infeasible optimization, which is why feasibility is stated explicitly.
\end{proof}

\section{Causal Timetable-Driven 15-Minute Synthesis}
\label{sec:synthesis}

This section constructs a causal, timetable-driven intra-hour shape while preserving every declared hourly energy exactly.
It first derives train-level electrical power, then assigns that power to control areas and quarter-hour bins, proves conservation and causality, and finally explains when the synthesis is executed and how one identified base trajectory is shared by the two energy management layers.

\subsection{Train-run and longitudinal model}

For a timetabled train run $\varrho$, let
$\mathcal S_\varrho=(s_1,\ldots,s_{n_\varrho})$ be the stop sequence,
$\mathcal Q_\varrho=((\tau_{\varrho,k}^{\mathrm{arr}},
\tau_{\varrho,k}^{\mathrm{dep}}))_{k=1}^{n_\varrho}$ the issued absolute UTC arrival and
departure times, $\mathsf c_\varrho$ the traffic category, and $m_\varrho^{\mathrm{tr}}$ the train
mass.  On the leg from $s_k$ to $s_{k+1}$, the booked running time and route length are
$T_{\varrho,k}$ and $d_{\varrho,k}$.

The longitudinal train dynamics use the classical Davis resistance model
\cite{davis1926,rochard2000}.  In standard railway notation,
\begin{equation}
\lambda_\varrho^{\mathrm{rot}}m_\varrho^{\mathrm{tr}}\dot v_\varrho(\vartheta)
=F_\varrho^{\mathrm{trac}}(\vartheta)
-\bigl(A_\varrho+B_\varrho v_\varrho(\vartheta)
+C_\varrho v_\varrho(\vartheta)^2\bigr)
-m_\varrho^{\mathrm{tr}}g\sin\gamma,
\label{eq:train-dynamics}
\end{equation}
where $\vartheta\in\TUTC$ is absolute time,
$\lambda_\varrho^{\mathrm{rot}}>0$ is the rotating-mass factor, and $\gamma$ is the line gradient
\cite{steimel2014}.  A trapezoidal speed profile accelerates at $a^+>0$, cruises at
$v^{\mathrm{cr}}$, and brakes at $a^-<0$.  Define
\begin{equation}
\kappa_v\defeq\frac12\left(\frac{1}{a^+}+\frac{1}{|a^-|}\right).
\end{equation}
The distance closure is
\begin{equation}
\kappa_v(v^{\mathrm{cr}})^2-T_{\varrho,k}v^{\mathrm{cr}}+d_{\varrho,k}=0.
\label{eq:cruise-quadratic}
\end{equation}

\begin{proposition}[Admissible cruise speed]
\label{prop:cruise-speed}
If $T_{\varrho,k}^2\ge4\kappa_vd_{\varrho,k}$, the unique root of
\eqref{eq:cruise-quadratic} that yields a nonnegative cruise duration is
\begin{equation}
v^{\mathrm{cr}}=
\frac{T_{\varrho,k}-\sqrt{T_{\varrho,k}^2-4\kappa_vd_{\varrho,k}}}{2\kappa_v}.
\label{eq:cruise-speed}
\end{equation}
If $T_{\varrho,k}^2<4\kappa_vd_{\varrho,k}$, the booked time is infeasible for the assumed
accelerations.  A triangular profile may still preserve the distance by taking
$v^{\mathrm{pk}}=\sqrt{d_{\varrho,k}/\kappa_v}$, but its running time must then be relaxed.
\end{proposition}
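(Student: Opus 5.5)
The plan is to derive the distance closure \eqref{eq:cruise-quadratic} from the trapezoidal profile, solve it with the quadratic formula, and then use the sign of the resulting cruise duration to select the admissible root.

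\emph{Duration of each phase.} Acceleration from rest to $v^{\mathrm{cr}}$ takes $v^{\mathrm{cr}}/a^+$ and covers $(v^{\mathrm{cr}})^2/(2a^+)$. Braking takes $v^{\mathrm{cr}}/|a^-|$ and covers $(v^{\mathrm{cr}})^2/(2|a^-|)$. The ramps together therefore last $2\kappa_v v^{\mathrm{cr}}$ and cover $\kappa_v(v^{\mathrm{cr}})^2$. Define the cruise duration
\[
T^{\mathrm{cr}}=T_{\varrho,k}-2\kappa_v v^{\mathrm{cr}}.
\]
The leg distance is then
\[
d_{\varrho,k}=\kappa_v(v^{\mathrm{cr}})^2+v^{\mathrm{cr}}\bigl(T_{\varrho,k}-2\kappa_v v^{\mathrm{cr}}\bigr)
=T_{\varrho,k}v^{\mathrm{cr}}-\kappa_v(v^{\mathrm{cr}})^2,
\]
which is exactly \eqref{eq:cruise-quadratic}. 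Since $\kappa_v>0$, the quadratic has the real roots
\[
v_\pm=\frac{T_{\varrho,k}\pm\sqrt{\Delta}}{2\kappa_v},\qquad \Delta=T_{\varrho,k}^2-4\kappa_vd_{\varrho,k},
\]
when $\Delta\ge0$, and no real root when $\Delta<0$.

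\emph{Root selection.} Substituting $v_\pm$ gives $T^{\mathrm{cr}}=\mp\sqrt{\Delta}$. Hence $v_-$ always yields a nonnegative cruise duration. The root $v_+$ does so only when $\Delta=0$, and then $v_+=v_-$. This proves uniqueness and gives formula \eqref{eq:cruise-speed}. Nonnegativity of $v_-$ itself follows from $d_{\varrho,k}\ge0$, because $\sqrt{\Delta}\le T_{\varrho,k}$. One could also note that $v_-=2d_{\varrho,k}/(T_{\varrho,k}+\sqrt\Delta)$ by rationalization, which avoids cancellation.

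\emph{Infeasible case.} If $\Delta<0$, no real cruise speed satisfies the closure, so no trapezoidal profile with the prescribed accelerations matches both $T_{\varrho,k}$ and $d_{\varrho,k}$. The booked time is then infeasible. In this case take the triangular profile with zero cruise: setting $T^{\mathrm{cr}}=0$, the distance $\kappa_v(v^{\mathrm{pk}})^2=d_{\varrho,k}$ gives $v^{\mathrm{pk}}=\sqrt{d_{\varrho,k}/\kappa_v}$. Its running time is
\[
2\kappa_v v^{\mathrm{pk}}=2\sqrt{\kappa_vd_{\varrho,k}}>T_{\varrho,k},
\]
so the running time must be relaxed.

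\emph{Main obstacle.} The only delicate point is the interpretation of ``unique''. The larger root $v_+$ is excluded not by the algebra but by the physical requirement $T^{\mathrm{cr}}\ge0$. The case $\Delta=0$ has to be handled explicitly as a double root, where both roots coincide and uniqueness still holds.
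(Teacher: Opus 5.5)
Your proposal is correct and follows essentially the same route as the paper: the ramps take $2\kappa_v v^{\mathrm{cr}}$, so the roots $v_\pm$ give cruise duration $\mp\sqrt{T_{\varrho,k}^2-4\kappa_v d_{\varrho,k}}$, and the negative-discriminant case is handled by the zero-cruise triangular profile, whose running time $2\sqrt{\kappa_v d_{\varrho,k}}$ exceeds $T_{\varrho,k}$. The paper takes the distance closure and $v_-\ge0$ for granted; your derivation of the closure and your nonnegativity check (which tacitly uses $T_{\varrho,k}\ge0$) are harmless additions.
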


\begin{proof}
The two roots of \eqref{eq:cruise-quadratic} are
$v_\pm=(T\pm\sqrt{T^2-4\kappa_vd})/(2\kappa_v)$.  The total acceleration and braking time
is
\[
T^{\mathrm{acc}}+T^{\mathrm{br}}=
v^{\mathrm{cr}}\left(\frac{1}{a^+}+\frac{1}{|a^-|}\right)=2\kappa_vv^{\mathrm{cr}}.
\]
For $v_+$ this value is $T+\sqrt{T^2-4\kappa_vd}>T$ unless the discriminant is zero, so the
implied cruise duration is negative.  For $v_-$ it is
$T-\sqrt{T^2-4\kappa_vd}\le T$, so the cruise duration is nonnegative.  Hence only
$v_-$ is physically admissible.  When the discriminant is negative, no trapezoidal profile
with the booked time exists.  Setting the cruise duration to zero gives
$d=\kappa_v(v^{\mathrm{pk}})^2$, which proves the triangular formula and also shows that its
duration $2\kappa_vv^{\mathrm{pk}}=2\sqrt{\kappa_vd}$ exceeds the infeasible booked time.
\end{proof}

The tractive force follows from \eqref{eq:train-dynamics}, and the wheel power is
$P_\varrho^{\mathrm{mech}}(\vartheta)
=F_\varrho^{\mathrm{trac}}(\vartheta)v_\varrho(\vartheta)$.  With drivetrain efficiency
$\eta_{\mathrm d}$, regenerative fraction $\eta_{\mathrm{rb}}$, and auxiliary power
$P_\varrho^{\mathrm{aux}}$, define the pantograph power
\begin{equation}
p_\varrho^{\mathrm{el}}(\vartheta)=
\begin{cases}
P_\varrho^{\mathrm{mech}}(\vartheta)/\eta_{\mathrm d}+P_\varrho^{\mathrm{aux}},
&P_\varrho^{\mathrm{mech}}(\vartheta)\ge0,\\
\eta_{\mathrm d}\eta_{\mathrm{rb}}P_\varrho^{\mathrm{mech}}(\vartheta)+P_\varrho^{\mathrm{aux}},
&P_\varrho^{\mathrm{mech}}(\vartheta)<0.
\end{cases}
\label{eq:pantograph-power}
\end{equation}
The gross motoring and available regenerative contributions of
this train are therefore
\begin{equation}
\pi_\varrho^{\mathrm{mot}}(\vartheta)= \max\{p_\varrho^{\mathrm{el}}(\vartheta),0\},
\qquad
\pi_\varrho^{\mathrm{av}}(\vartheta)=\max\{-p_\varrho^{\mathrm{el}}(\vartheta),0\}.
\label{eq:train-split}
\end{equation}
Consequently, \eqref{eq:train-split} separates positive electrical demand from
the magnitude of negative electrical power without allowing either channel to be negative.

\subsection{Area assignment, quarter-hour templates, and concentration}

Let $\chi(x)$ assign each track position to the electrical control area that supplies it.
For $h\in\mathcal H^{\mathrm{DA}}$ and $k\in\{0,\ldots,\nu-1\}$, $\nu=4$, define the
quarter-hour bin
\begin{equation}
\mathcal B_{h,k}\defeq
\left[\tau_0^{\mathrm{DA}}+h\DeltaDA+k\DeltaID,\,
\tau_0^{\mathrm{DA}}+h\DeltaDA+(k+1)\DeltaID\right).
\label{eq:quarter-hour-bin}
\end{equation}
For this hourly grid, $\DeltaDA=\nu\DeltaID$.  The raw
timetable shapes are obtained with the proposition indicator
\begin{equation}
\mathbf 1\{\mathsf P\}\defeq
\begin{cases}
1,&\text{if the proposition $\mathsf P$ is true},\\
0,&\text{if the proposition $\mathsf P$ is false}.
\end{cases}
\label{eq:indicator-definition}
\end{equation}
Thus $\mathbf 1\{\chi(x_\varrho(\vartheta))=i\}$ assigns the instantaneous contribution of train
$\varrho$ to area $i$ and to no other area.  The raw powers are
\begin{align}
\widetilde P_{i,h,k}^{\mathrm{mot}}
&=\frac{1}{\DeltaID}\sum_\varrho\int_{\mathcal B_{h,k}}
\mathbf 1\{\chi(x_\varrho(\vartheta))=i\}\pi_\varrho^{\mathrm{mot}}(\vartheta)\,\mathrm d\vartheta,
\label{eq:raw-motor}\\
\widetilde P_{i,h,k}^{\mathrm{av}}
&=\frac{1}{\DeltaID}\sum_\varrho\int_{\mathcal B_{h,k}}
\mathbf 1\{\chi(x_\varrho(\vartheta))=i\}\pi_\varrho^{\mathrm{av}}(\vartheta)\,\mathrm d\vartheta.
\label{eq:raw-regen}
\end{align}
In a public-data-constrained specialization, complete train paths may be unavailable.  The
same equations are then evaluated with confirmed category counts, representative train-leg
profiles, and clock-face arrival and departure templates.  That specialization remains a
model-derived shape, not a measurement.

For $q\in\{\mathrm{mot},\mathrm{av}\}$, normalize the nonnegative clock-face template as
\begin{equation}
\widehat w_{i,h,k}^{q}=
\begin{cases}
\widetilde P_{i,h,k}^{q}/\sum_{j=0}^{\nu-1}\widetilde P_{i,h,j}^{q},
&\sum_j\widetilde P_{i,h,j}^{q}>0,\\
1/\nu,&\text{otherwise}.
\end{cases}
\label{eq:normalized-template}
\end{equation}
The concentration parameter $c\in[0,1]$ blends this clock-face shape with a uniform shape,
\begin{equation}
w_{i,h,k}^{q}(c)=\frac{1-c}{\nu}+c\widehat w_{i,h,k}^{q},
\qquad
P_{i,h,k}^{q}(c)=\nu\,\overline P_i^{q}(h)w_{i,h,k}^{q}(c),
\label{eq:concentration}
\end{equation}
where $\overline P_i^{q}(h)$ is the hourly mean target.
Thus $c=0$ gives four equal quarter-hour values, $c=1$ gives the fully clock-face-shaped template, and an intermediate value retains part of the timetable structure while smoothing exact synchronization.
The parameter is a benchmark modelling choice.

\begin{remark}[Physical interpretation of the concentration parameter]
The two limiting shapes in \eqref{eq:concentration} have distinct meanings.  The uniform
shape $1/\nu$ spreads each hourly channel equally over all quarter-hours.  The clock-face
shape $\widehat w^q$ places braking near the assumed arrival bins, acceleration near the
corresponding departure bins, and the more persistent cruising contribution across the
intervening bins.  The scalar $c$ controls the strength of this synchronization.  In
particular, $c=1$ selects the most concentrated shape permitted by the declared clock-face
template.
It does not assert perfect train punctuality.
A value such as $c=0.85$ retains 85 percent of the template departure from uniformity and 15 percent uniform smoothing.
\end{remark}

\subsection{Conservation, causality, and execution contract}

\begin{theorem}[Exact hourly conservation and nonnegativity]
\label{thm:synthesis-conservation}
For every $c\in[0,1]$, area $i$, hour $h$, and channel
$q\in\{\mathrm{mot},\mathrm{av}\}$, the synthesis \eqref{eq:concentration} satisfies
\begin{equation}
P_{i,h,k}^{q}(c)\ge0,
\qquad
\frac1\nu\sum_{k=0}^{\nu-1}P_{i,h,k}^{q}(c)=\overline P_i^{q}(h).
\label{eq:hourly-conservation}
\end{equation}
Consequently it changes only the intra-hour allocation and neither creates nor removes
hourly energy.
\end{theorem}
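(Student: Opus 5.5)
The plan is to work directly from the definitions \eqref{eq:normalized-template} and \eqref{eq:concentration}. First I will show that the normalized template $\widehat w^{q}_{i,h,\cdot}$ is a probability vector on $\{0,\ldots,\nu-1\}$, meaning its entries are nonnegative and sum to one. Everything else then follows because convex combinations of probability vectors are again probability vectors. The conclusion is conditional on the hourly target satisfying $\overline P_i^{q}(h)\ge0$. That sign is the one hypothesis the statement leaves implicit, and I will make it explicit: it holds because $\overline P_i^{q}(h)$ is an hourly mean of a gross motoring or available regenerative power, both nonnegative in \cref{ass:standing-paper}(iii).

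\textbf{Step 1 (template is a probability vector).} By \eqref{eq:train-split}, each $\pi_\varrho^{q}(\vartheta)$ is a maximum with zero, so it is nonnegative. The indicator in \eqref{eq:raw-motor}--\eqref{eq:raw-regen} is also nonnegative, and $\DeltaID>0$. Hence every raw value $\widetilde P^{q}_{i,h,k}\ge0$. I then split into the two cases of \eqref{eq:normalized-template}.
\begin{itemize}
\item If $S\defeq\sum_j\widetilde P^{q}_{i,h,j}>0$, each weight is a nonnegative number divided by $S$. Summing over $k$ gives $S/S=1$.
\item Otherwise every weight equals $1/\nu$, which is nonnegative, and the weights sum to $\nu\cdot(1/\nu)=1$.
\end{itemize}
The raw values are nonnegative, so $S\ge0$. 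The two cases are therefore exhaustive, and the second one covers exactly $S=0$.

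\textbf{Step 2 (blended weights).} For $c\in[0,1]$, the weight $w^{q}_{i,h,k}(c)$ is a convex combination of $1/\nu$ and $\widehat w^{q}_{i,h,k}$, with coefficients $1-c\ge0$ and $c\ge0$. Both terms are nonnegative, so $w^{q}_{i,h,k}(c)\ge0$. Summing over $k$ gives $(1-c)\cdot1+c\cdot1=1$.

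\textbf{Step 3 (conclusion).} Since $P^{q}_{i,h,k}(c)=\nu\,\overline P^{q}_i(h)\,w^{q}_{i,h,k}(c)$, nonnegativity follows from $\nu>0$, $\overline P^{q}_i(h)\ge0$, and Step 2. For conservation,
\[
\frac1\nu\sum_k P^{q}_{i,h,k}(c)=\overline P^{q}_i(h)\sum_k w^{q}_{i,h,k}(c)=\overline P^{q}_i(h).
\]
Multiplying this identity by $\DeltaDA=\nu\DeltaID$ gives
\[
\sum_k P^{q}_{i,h,k}(c)\,\DeltaID=\overline P^{q}_i(h)\,\DeltaDA .
\]
This says the quarter-hour energies add up to the hourly energy target, which is the claim that only the intra-hour allocation changes.

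I expect no serious obstacle. The only delicate points are the degenerate case $S=0$, which the uniform fallback handles, and the nonnegativity of $\overline P^{q}_i(h)$. If that target were allowed to be negative, nonnegativity of $P^{q}_{i,h,k}(c)$ would fail while conservation would still hold, so the proof depends on stating that assumption.
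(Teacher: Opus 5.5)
Your proof is correct and follows the paper's argument: the normalized template, including the $1/\nu$ fallback, is a nonnegative unit-sum vector; the blend with the uniform shape is a convex combination for $c\in[0,1]$; and multiplying by $\nu\overline P_i^q(h)\ge0$ gives both claims. You are right to make $\overline P_i^q(h)\ge0$ explicit, since the paper uses it tacitly, but \cref{ass:standing-paper}(iii) only constrains the intraday scenario data, not the hourly synthesis targets, so this sign condition is better stated directly as a hypothesis on the targets.
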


\begin{proof}
By \eqref{eq:normalized-template}, $\widehat w_{i,h,k}^{q}\ge0$ and
$\sum_k\widehat w_{i,h,k}^{q}=1$, including the zero-raw-energy fallback.  Since
$c\in[0,1]$, \eqref{eq:concentration} is a convex combination of two nonnegative unit-sum
vectors.  Hence $w_{i,h,k}^{q}(c)\ge0$ and $\sum_kw_{i,h,k}^{q}(c)=1$.  Multiplication by
$\nu\overline P_i^q(h)\ge0$ proves nonnegativity, and
\[
\frac1\nu\sum_kP_{i,h,k}^{q}(c)
=\overline P_i^q(h)\sum_kw_{i,h,k}^{q}(c)=\overline P_i^q(h),
\]
which proves exact conservation.  Multiplying both sides by one hour gives equality of the
hourly energies.
\end{proof}

When multiplicative rescaling is unsuitable, the closest nonnegative profile to a raw
template may instead be obtained from
\begin{align}
\min_{P_{i,h,k}^{q}\ge0}\quad &
\sum_{k=0}^{\nu-1}a_{i,h,k}^{q}
(P_{i,h,k}^{q}-\widetilde P_{i,h,k}^{q})^2
+\varsigma\sum_{k=0}^{\nu-2}(P_{i,h,k+1}^{q}-P_{i,h,k}^{q})^2
\label{eq:projection-qp}\\
\text{s.t.}\quad&\frac1\nu\sum_{k=0}^{\nu-1}P_{i,h,k}^{q}
=\overline P_i^{q}(h),
\nonumber
\end{align}
with $a_{i,h,k}^{q}>0$ and $\varsigma\ge0$.  Its Hessian is the sum of a positive diagonal
matrix and a positive-semidefinite first-difference Gramian.
Hence,~\eqref{eq:projection-qp} is a strictly convex QP on a nonempty polyhedron and has one unique solution.
This least-distortion alternative preserves \eqref{eq:hourly-conservation} exactly.

\begin{theorem}[Causality of the synthesis]
\label{thm:synthesis-causal}
Fix an issue time $\tau_n$.
If the timetable or confirmed train counts, route and rolling-stock parameters, and hourly targets used in \eqref{eq:train-dynamics}--\eqref{eq:projection-qp} are all $\mathcal F_{\tau_n}$-measurable, and no realized measurement later than $\tau_n$ is used, then the synthesized quarter-hour profiles are $\mathcal F_{\tau_n}$-measurable.
\end{theorem}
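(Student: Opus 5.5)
The proof follows the synthesis pipeline \eqref{eq:train-dynamics}--\eqref{eq:projection-qp} stage by stage. The key fact is that a Borel-measurable function of $\mathcal F_{\tau_n}$-measurable random variables is again $\mathcal F_{\tau_n}$-measurable. We show that every stage is such a Borel (in fact piecewise-continuous) map of issue-time data, and then compose the stages.

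\emph{Step 1: train-level quantities.} The booked times $T_{\varrho,k}$, distances $d_{\varrho,k}$, accelerations $a^\pm$, mass, Davis coefficients, gradient, efficiencies and auxiliary power are $\mathcal F_{\tau_n}$-measurable by hypothesis. The cruise speed \eqref{eq:cruise-speed}, or the triangular fallback of \cref{prop:cruise-speed}, is a Borel function of $(T_{\varrho,k},d_{\varrho,k},\kappa_v)$. The case split on the sign of $T^2-4\kappa_vd$ is over a Borel set, so the composite map is Borel. Hence, for each fixed $\vartheta$, the trapezoidal profile $v_\varrho(\vartheta)$, the position $x_\varrho(\vartheta)$ (obtained by integrating $v_\varrho$ from the issued departure time), the tractive force from \eqref{eq:train-dynamics}, and the power $P^{\mathrm{mech}}_\varrho(\vartheta)$ are measurable. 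The pantograph map \eqref{eq:pantograph-power} and the split \eqref{eq:train-split} are piecewise-affine functions of $P^{\mathrm{mech}}_\varrho$, so $\pi^{\mathrm{mot}}_\varrho,\pi^{\mathrm{av}}_\varrho$ are jointly measurable in $(\vartheta,\omega)$.

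\emph{Step 2: bin integrals.} This is the main obstacle. We must show that the raw powers \eqref{eq:raw-motor}--\eqref{eq:raw-regen} are $\mathcal F_{\tau_n}$-measurable even though they integrate over bins $\mathcal B_{h,k}$ that may lie after $\tau_n$. The point to stress is that causality concerns information, not the time variable. The integrand is a function of $\vartheta$ computed entirely from issue-time data and contains no realized measurement later than $\tau_n$, which is the stated hypothesis. The integrand $(\vartheta,\omega)\mapsto \mathbf 1\{\chi(x_\varrho(\vartheta))=i\}\pi^q_\varrho(\vartheta)$ is jointly measurable, assuming $\chi$ is Borel (a finite partition of track into intervals), and it is bounded on the bounded bin. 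Tonelli/Fubini then gives that $\omega\mapsto\int_{\mathcal B_{h,k}}(\cdot)\,\mathrm d\vartheta$ is $\mathcal F_{\tau_n}$-measurable. The sum over trains is finite, being a timetable fixed at $\tau_n$, and finite sums preserve measurability. Any ambiguity in how many trains appear would be handled by noting that confirmed counts are themselves $\mathcal F_{\tau_n}$-measurable, and summing over a measurable index.

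\emph{Step 3: normalization, concentration and projection.} The template \eqref{eq:normalized-template} is a Borel function: a ratio on the measurable event $\{\sum_j\widetilde P^q_{i,h,j}>0\}$ and the constant $1/\nu$ on its complement. The concentration step \eqref{eq:concentration} is affine in the template and in $\overline P^q_i(h)$, with $c$ a fixed design constant, so it preserves measurability. For the alternative \eqref{eq:projection-qp}, the problem is a strictly convex QP on a nonempty polyhedron, so it has a unique minimizer. That minimizer is a continuous (piecewise-affine) function of the data $(\widetilde P,\overline P,a,\varsigma)$ by standard parametric-QP results, since only the linear term and the right-hand side vary; in particular it is Borel. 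Composing Steps 1--3 yields $\mathcal F_{\tau_n}$-measurability of $P^q_{i,h,k}$, which completes the argument.
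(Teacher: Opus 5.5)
Your proposal is correct and is the paper's argument in expanded form: the paper observes that every stage of the synthesis (train integration, area assignment, binning, normalization, convex mixing, and the unique minimizer of \eqref{eq:projection-qp}) is a deterministic Borel map of issue-time inputs, hence $\mathcal F_{\tau_n}$-measurable, and you supply the stagewise justifications that the paper takes for granted (Borel case splits, Tonelli for the bin integrals, continuity of the QP minimizer). One minor slip: if $a$ and $\varsigma$ are treated as varying data, the Hessian of \eqref{eq:projection-qp} also varies, so ``only the linear term and the right-hand side vary'' is inaccurate and the minimizer need not be piecewise affine; it is still continuous (e.g.\ by Berge's maximum theorem plus uniqueness), which suffices for Borel measurability.
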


\begin{proof}
For fixed inputs, integration of the deterministic train equations, area assignment,
binning, normalization, convex mixing, and the unique minimizer of
\eqref{eq:projection-qp} are deterministic maps.  A deterministic Borel-measurable map of
$\mathcal F_{\tau_n}$-measurable inputs is $\mathcal F_{\tau_n}$-measurable.  Since the
construction contains no realized value after $\tau_n$, its output cannot depend on future
measurements.
\end{proof}

\ifieeetran\appendices\else\appendix\fi
\section{Computation of QP matrices}
\label{app:qp-blocks-paper}

We use the block order of \eqref{eq:local-vector-paper}, with stages increasing
inside each scenario and fixed orders for edges and units.
For a scalar coordinate $y$ of $x_i$, define the selector row
$R_i[y]\in\{0,1\}^{1\times n_i}$ by placing one in the column of $y$ and zeros
elsewhere.  Thus $R_i[y]x_i=y$.
In this appendix only, arguments $(t,m)$ on powers, flows, and angles inside
selectors are suppressed when the row or sum is indexed by $(t,m)$.
Selectors for absent assets are zero in sums, and their individual constraints
and cost terms are omitted.  Empty blocks contribute no rows or columns.

For every applicable index, stack the following coefficient rows and right-hand
sides to obtain $A_{i,\mathrm{eq}}$ and $b_{i,\mathrm{eq}}$.
Here $U_i(t,m)$ stacks the selectors of the coordinates of the already defined
$u_i(t,m)$, so that $U_i(t,m)x_i=u_i(t,m)$.
\begin{longtable}{@{}p{0.72\linewidth}p{0.24\linewidth}@{}}
\toprule
Equality coefficient row or block & Right-hand side\\
\midrule
\endhead
$\begin{aligned}[t]
&R_i[\pconv_i]+R_i[\pd_i]-R_i[\pc_i]+R_i[\pres_i]+R_i[\preg_i]\\
&\quad+\sum_{g\in\Gset_i}\bigl(R_i[\pgen_g]-R_i[\ppump_g]\bigr)
-\sum_{e\ni i}\sigma_{i,e}R_i[\pflow_{e,i}]
\end{aligned}$
& $\Pmot_i(t,m)$\\[3pt]
$R_i[\pflow_{e,i}]-\sigma_{i,e}B_e(R_i[\theta_{i,i}]-R_i[\theta_{j,i}])$,
$e$ joining $i,j$
& $0$\\[3pt]
$\begin{aligned}[t]
&R_i[E_i(t+1,m)]-R_i[E_i(t,m)]\\
&\quad-\DeltaID\eta_i^{\mathrm c}R_i[\pc_i]
+\DeltaID(\eta_i^{\mathrm d})^{-1}R_i[\pd_i]
\end{aligned}$, $i\in\Bset$
& $0$\\[3pt]
$R_i[E_i(0,m)]$, $i\in\Bset$ & $E_i^0$\\[3pt]
$R_i[\pconv_i]-R_i[q_i^+]+R_i[q_i^-]$, $i\in\Iset$ & $0$\\[3pt]
$\DeltaID\sum_{t\in G_{n,h}}R_i[\pconv_i(t,m)]$,
$i\in\Iset$, gate-closed $h$ satisfying \eqref{eq:remaining-market-energy}
& $e_i^{\mathrm{DA}}(h)-e_{i,h}^{\mathrm{past}}(n)$\\[3pt]
$U_i(0,m)-U_i(0,1)$, $m=2,\ldots,M$ & $0$\\
\bottomrule
\end{longtable}
Rows involving a stage are included for every $(t,m)\in\Tset\times\Mset$.
Initial-energy and non-anticipativity rows occur only at their displayed indices.
Market-energy rows occur once per applicable nonempty $G_{n,h}$ and scenario.

For each coordinate bound $\underline y\le y\le\overline y$ with endpoints
fixed at the issue time in
\eqref{eq:regen-bounds-paper}, \eqref{eq:line-limit-paper},
\eqref{eq:bess-box-paper}, \eqref{eq:converter-split-paper},
\eqref{eq:frozen-commitment-paper}, \eqref{eq:plant-bounds-paper},
\eqref{eq:renewable-bounds-paper}, and \eqref{eq:peak-slack-paper}, include
\begin{equation}
\begin{bmatrix}-R_i[y]\\R_i[y]\end{bmatrix}x_i
\le\begin{bmatrix}-\underline y\\\overline y\end{bmatrix}.
\label{eq:qp-box-rows-paper}
\end{equation}
A missing or infinite endpoint contributes no row.
Here the endpoints are exactly the bounds already specified in those equations.
In particular, the bounds on $E_i(s,m)$ use $s=0,\ldots,H$, whereas the
nonnegativity of $\speak_i(m)$ contributes one row per scenario.
Append the following rows, using $\zeta\in\{-1,1\}$ for both signs, to form
$A_{i,\mathrm{in}}$ and $b_{i,\mathrm{in}}$.
\begin{longtable}{@{}p{0.72\linewidth}p{0.24\linewidth}@{}}
\toprule
Additional inequality coefficient row & Right-hand side\\
\midrule
\endhead
$-R_i[E_i(H,m)]$, $i\in\Bset$ & $-E_i^{\mathrm{term}}$\\[3pt]
$\zeta R_i[\pconv_i(0,m)]$, $i\in\Iset$
& $\Delta\overline P_i^{\mathrm{cv}}+\zeta p_{i,n}^{\mathrm{conv,prev}}$\\[3pt]
$\zeta(R_i[\pconv_i(t,m)]-R_i[\pconv_i(t-1,m)])$,
$i\in\Iset$, $t\ge1$
& $\Delta\overline P_i^{\mathrm{cv}}$\\[3pt]
$R_i[q_i^+]-R_i[\speak_i(m)]$, $i\in\Iset$
& $\bar p_i^{\mathrm{DA}}$\\[3pt]
$a_{e,\kappa}R_i[\pflow_{e,i}]-R_i[\ell_e]$,
$i=\mathrm{from}(e)$, $\kappa=1,\ldots,K_\ell$
& $-b_{e,\kappa}$\\
\bottomrule
\end{longtable}
Each local matrix has $n_i$ columns.  Define
$\widehat S_i=\operatorname{col}_{a\in\{i\}\cup\Nbr{i}}S_i^{(a)}$,
$\widehat S=\operatorname{blkdiag}_i\widehat S_i$, and
$Z_a=\mathbf e_a^\top\otimes I_{HM}$, where $\mathbf e_a$ is the $a$th
standard basis vector of $\R^N$ and $\otimes$ denotes the Kronecker product.
Thus $Z_a$ selects $z_a$ from $z=\operatorname{col}_a z_a$.
Stack $\widehat Z=\operatorname{col}_{i\in\Nset,\,a\in\{i\}\cup\Nbr{i}}Z_a$
in the row order of $\widehat S$.
With $D_{\mathrm{eq}}=\operatorname{blkdiag}_iA_{i,\mathrm{eq}}$,
$D_{\mathrm{in}}=\operatorname{blkdiag}_iA_{i,\mathrm{in}}$, and
$p=HM\sum_i(1+K_i)$, the coupled matrices are
\begin{equation}
\begin{aligned}
A_{\mathrm{eq}}&=\begin{bmatrix}D_{\mathrm{eq}}&\mathbf0\\
\widehat S&-\widehat Z\\
\mathbf0&Z_r\end{bmatrix},
&b_{\mathrm{eq}}&=\operatorname{col}\bigl((b_{i,\mathrm{eq}})_i,\mathbf0_p,\mathbf0_{HM}\bigr),\\
A_{\mathrm{in}}&=\begin{bmatrix}D_{\mathrm{in}}&\mathbf0\end{bmatrix},
&b_{\mathrm{in}}&=\operatorname{col}_i b_{i,\mathrm{in}}.
\end{aligned}
\label{eq:qp-constraint-blocks-paper}
\end{equation}
Here $r$ is the reference area.
Both matrices have $\sum_i n_i+NHM$ columns.
Multiplication by $x_i$ recovers the original local constraints row by row.
The $M-1$ non-anticipativity comparisons imply all pairwise comparisons.
The last two block rows of $A_{\mathrm{eq}}$ impose angle consensus and $z_r=0$.
This proves exact equality of the feasible sets.

Use \eqref{eq:stage-cost-paper} and \eqref{eq:local-objective-paper} directly.
For each $g\in\Gset_i$, define the row
$D_{ig}(t,m)=R_i[\pgen_g(t,m)]-R_i[\ppump_g(t,m)]$.
The local Hessian is
\begin{equation}
\begin{aligned}
Q_i=\varepsilon_iI_{n_i}+\sum_{m\in\Mset}\pi_m\sum_{t\in\Tset}\Big[&
w_i^{\mathrm{soc}}R_i[E_i(t+1,m)]^\top R_i[E_i(t+1,m)]\\
&+\DeltaID\sum_{g\in\Gset_i}a_g(t)D_{ig}(t,m)^\top D_{ig}(t,m)\Big].
\end{aligned}
\label{eq:local-hessian-explicit-paper}
\end{equation}
All selectors in the following sum have the same $(t,m)$ unless another index
is displayed.  The linear coefficient is
\begin{equation}
\begin{aligned}
q_i={}&\sum_{m\in\Mset}\pi_m\sum_{t\in\Tset}\Bigg\{
\DeltaID\Big[
c_i^E(t)R_i[q_i^+]-c_i^{E,\mathrm x}(t)R_i[q_i^-]\\
&\quad+c_i^{\mathrm b}\bigl(R_i[\pc_i]+R_i[\pd_i]\bigr)
-c_i^{\mathrm{ct}}R_i[\pres_i]-c_i^{\mathrm{rg}}R_i[\preg_i]\\
&\quad+\sum_{g\in\Gset_i}\bigl(c_g^{\mathrm g}(t)R_i[\pgen_g]
+c_g^{\mathrm p}(t)R_i[\ppump_g]\bigr)
+c^{\mathrm{ls}}\sum_{e:\,i=\mathrm{from}(e)}R_i[\ell_e]\Big]\\
&\quad-w_i^{\mathrm{soc}}E_i^{\mathrm{ref}}(t+1)R_i[E_i(t+1,m)]
\Bigg\}^{\!\top}
+Hc^{\mathrm{pk}}\sum_{m\in\Mset}\pi_mR_i[\speak_i(m)]^\top.
\end{aligned}
\label{eq:local-linear-explicit-paper}
\end{equation}
Each product in \eqref{eq:local-hessian-explicit-paper} is an $n_i\times n_i$
matrix and each transposed row in \eqref{eq:local-linear-explicit-paper} is
an $n_i$-vector.  Battery terms occur only for $i\in\Bset$, converter and peak
terms only for $i\in\Iset$, and renewable terms only for $i\in\Rset$.

To verify these expressions, expand the tracking squares in
\eqref{eq:soc-tracking-paper} and the net-plant-power squares in
\eqref{eq:plant-cost-paper}.  They give exactly the two sums in $Q_i$ and
the tracking term in $q_i$.  Each remaining coefficient in $q_i$ is its affine
coefficient in \eqref{eq:local-objective-paper}.
The factor $H$ on the peak term follows from its placement inside the stage sum.
Thus, without changing $\ell_i$,
$f_i(x_i)=\tfrac12x_i^\top Q_ix_i+q_i^\top x_i+f_i(0)$.
Every outer product in $Q_i-\varepsilon_iI_{n_i}$ has a nonnegative weight,
so $Q_i\succeq\varepsilon_iI_{n_i}\succeq0$.
Finally, the consensus coordinates do not appear in the cost, giving
$Q=\operatorname{blkdiag}(Q_1,\ldots,Q_N,\mathbf0_{NHM\times NHM})$
and $q=\operatorname{col}(q_1,\ldots,q_N,\mathbf0_{NHM})$.

\begin{singlespace}
\bibliographystyle{IEEEtran}
\bibliography{Bahnstrom_EMS_P1_Common}
\end{singlespace}

\end{document}